\begin{document}

\newtheorem{theorem}{Theorem}[section]
\newtheorem{thm*}{Theorem}[section]
\newtheorem{corollary}[theorem]{Corollary}
\newtheorem{proposition}[theorem]{Proposition}
\newtheorem{lemma}[theorem]{Lemma}
\newtheorem{sublemma}[theorem]{Sublemma}
\newtheorem{lemma*}{Lemma}
\newtheorem{conjecture}[theorem]{Conjecture}

\newdefinition{definition}[theorem]{Definition}
\newdefinition{remark}[theorem]{Remark}
\newdefinition{question}[theorem] {Question}

\newproof{proof}{Proof}


\newdefinition{example}[theorem]{Example}
\newtheorem{notation}[theorem]{Notation}
\newtheorem{terminology}[theorem]{Terminology}
\newtheorem{question*}{Question}

\numberwithin{equation}{theorem}

\setcounter{section}{0}

\date\today

\title{Rational points and orbits on the variety of elementary subalgebras}

\author[USC]{Jared Warner\corref{cor1}}
\ead{hjwarner@usc.edu}

\cortext[cor1]{Corresponding author: phone number 1-805-698-4463}

\begin{keyword}
restricted Lie algebras, Springer isomorphisms
\MSC[2010] 17B45, 20G40
\end{keyword}

\address[USC]{USC Dornsife, Department of Mathematics, 3620 S. Vermont Ave., KAP 104,
Los Angeles, CA 90089-2532, USA}

\begin{abstract}
For $G$ a connected, reductive group over an algebraically closed field $k$ of large characteristic, we use the canonical Springer isomorphism between the nilpotent variety of $\mathfrak{g}:=\mathrm{Lie}(G)$ and the unipotent variety of $G$ to study the projective variety of elementary subalgebras of $\mathfrak{g}$ of rank $r$, denoted $\mathbb{E}(r,\mathfrak{g})$.  In the case that $G$ is defined over $\mathbb{F}_p$, we define the category of $\mathbb{F}_q$-expressible subalgebras of $\mathfrak{g}$ for $q=p^d$, and prove that this category is isomorphic to a subcategory of Quillen's category of elementary abelian subgroups of the finite Chevalley group $G(\mathbb{F}_q)$.  This isomorphism of categories leads to a correspondence between $G$-orbits of $\mathbb{E}(r,\mathfrak{g})$ defined over $\mathbb{F}_q$ and $G$-conjugacy classes of certain elementary abelian subgroups of rank $rd$ in $G(\mathbb{F}_q)$ which satisfy a closure property characterized by the Springer isomorphism.  We use Magma to compute examples for $G=\mathrm{GL}_n$, $n\le 5$.
\end{abstract}

\maketitle

In \cite{CFP}, J. Carlson, E. Friedlander, and J. Pevtsova initiated the study of $\mathbb{E}(r,\mathfrak{g})$, the projective variety of rank $r$ elementary subalgebras of a restricted lie algebra $\mathfrak{g}$.  The authors demonstrate that the study of $\mathbb{E}(r,\mathfrak{g})$ informs the representation theory and cohomology of $\mathfrak{g}$.  This is all reminiscent of the case of a finite group $G$, where the elementary abelian $p$-subgroups play a significant role in the story of the representation theory and cohomology of $G$, as first explored by Quillen in \cite{Quillen1} and \cite{Quillen2}.

In this paper, we further explore the structure of $\mathbb{E}(r,\mathfrak{g})$ and its relationship with elementary abelian subgroups.  Theorem \ref{IsoCat} shows in the case that $\mathfrak{g}$ is the Lie algebra of a connected, reductive group $G$ defined over $\mathbb{F}_p$, the category of $\mathbb{F}_q$-expressible subalgebras (Definitions \ref{FpExpressible} and \ref{CatOfSubAlg}) is isomorphic to a subcategory of Quillen's category of elementary abelian $p$-subgroups of $G(\mathbb{F}_q)$, where $q=p^d$.  Specifically, we introduce the notion of an $\mathbb{F}_q$-linear subgroup (Definition \ref{FqLinear}), and we show in Corollary \ref{Bij} that the $\mathbb{F}_q$-expressible subalgebras of rank $r$ are in bijection with the $\mathbb{F}_q$-linear elementary abelian subgroups of rank $rd$ in $G(\mathbb{F}_q)$.  This bijection leads to Corollary \ref{MaximalElemAb}, which allows us to compute the largest integer $R=R(\mathfrak{g})$ such that $\mathbb{E}(R,\mathfrak{g})$ is nonempty for a simple Lie algebra $\mathfrak{g}$.  These values are presented in Table \ref{Largest}.

The results and definitions in \S \ref{AnIsoOfCat} rely on the canonical Springer isomorphism $\sigma:\mathcal{N}(\mathfrak{g})\to\mathcal{U}(G)$, which has been shown to exist under the hypotheses we assume in this paper, as detailed in \cite{Seitz}, \cite{CLN}, \cite{Sobaje1}, and \cite{McNinch}. Together with Lang's theorem, Theorem \ref{IsoCat} implies Theorem \ref{FinOrb}, which establishes a natural bijection between the $G$-orbits of $\mathbb{E}(r,\mathfrak{g})$ defined over $\mathbb{F}_q$ and the $G$-conjugacy classes of $\mathbb{F}_q$-linear elementary abelian subgroups of rank $rd$ in $G(\mathbb{F}_q)$.  Example \ref{InfOrb}, due to R. Guralnick, shows that $\mathbb{E}(r,\mathfrak{g})$ may be an infinite union of $G$-orbits (in fact this is usually the case).  However, Proposition \ref{FriedConj} demonstrates that $\mathbb{E}(R(\mathfrak{g}),\mathfrak{g})$ is a finite union of orbits for all connected, reductive $G$ such that $(G,G)$ is an almost-direct product of simple groups of classical type.  We believe that $\mathbb{E}(R(\mathfrak{g}),\mathfrak{g})$ is a finite union of orbits for all connected, reductive groups, and Proposition \ref{FriedConj} reduces the verification of this belief to proving a claim about conjugacy classes of elementary abelian $p$-subgroups in $G(\mathbb{F}_q)$ for varying $d$ and for exceptional simple groups $G$.  Our interest in describing the $G$-orbits is motivated by the results of \S 6 in \cite{CFP}, where the authors construct algebraic vector bundles on $G$-orbits of $\mathbb{E}(r,\mathfrak{g})$ associated to a rational $G$-module $M$ via the restriction of image, cokernel, and kernel sheaves.

Through personal communication with the author, E. Friedlander asked for conditions implying that $\mathbb{E}(r,\mathfrak{g})$ is irreducible.  In the case that $\mathfrak{g}=\mathfrak{gl}_n$, Theorem \ref{Irreducible} presents certain ordered pairs $(r,n)$ for which $\mathbb{E}(r,\mathfrak{g})$ is irreducible.  This theorem relies on previous results concerning the irreducibility of $\mathcal{C}_r(\mathcal{N}(\mathfrak{gl}_n))$, the variety of $r$-tuples of pair-wise commuting, nilpotent $n\times n$ matrices (see \cite{Sivic} for a nice summary of these results).

Finally, in \S \ref{Group}, we compute a few examples for $G=\mathrm{GL}_n$.  Some of the computations depend on Conjecture \ref{OrbitSize}, which supposes the dimension of an orbit is related to the size of the corresponding $G$-conjugacy class.  Equation \eqref{Dim} computes the dimension of $\mathbb{E}(r,\mathfrak{gl}_n)$ for all $(r,n)$ such that $\mathcal{C}_r(\mathcal{N}(\mathfrak{gl}_n))$ is irreducible, and surprisingly this equation agrees with computations of $\dim(\mathbb{E}(r,\mathfrak{gl}_n))$ even for ordered pairs where $\mathcal{C}_r(\mathcal{N}(\mathfrak{gl}_n))$ is known to be reducible.  Proposition \ref{RegOrb} computes the dimension of the open orbit defined by a regular nilpotent element, as first considered in Proposition $3.19$ of \cite{CFP}.  For $n\le 5$, we bound the number of $G$-orbits in $\mathbb{E}(r,\mathfrak{gl}_n)$ defined over $\mathbb{F}_q$ and compute their dimensions. 

\section{Review and Preliminaries}\label{Review}

Let $k$ be an algebraically closed field of characteristic $p>0$, and let $G$ be a connected, reductive algebraic group over $k$, with coxeter number $h=h(G)$.  Following \S 2 in \cite{Steinberg}, we let $\pi=\pi(G)$ denote the fundamental group of $G'=(G,G)$.  We will often require that $p$ satisfies the following two conditions, which will be collectively referred to as condition $(\star)$:
\begin{equation}
\tag{$\star$}
\begin{aligned}
(1)&\;p\ge h\\
(2)&\;p \nmid |\pi|\\
\end{aligned}
\end{equation}
We make three remarks about condition $(\star)$.  First, $(1)$ implies $(2)$ in all cases except when $p=h$ and $G'$ has an adjoint component of type $A$.  Second, $(2)$ is equivalent to the separability of the universal cover $G'_{sc}\to G'$ (\cite{Steinberg},\S 2.4).  For example, the canonical map $\operatorname{SL}_p\to\operatorname{PSL}_p$ is not separable in characteristic $p$, so we must exclude the case $G=\operatorname{PSL}_p$.  Third, $(\star)$ implies that $p$ is non-torsion for $G$ (cf. \S 2 in \cite{LMT}), which we require to use Theorem 2.2 of \cite{LMT} in our proof of Theorem \ref{CanonSpringer}.

The unipotent elements of $G$ form an irreducible closed subvariety of $G$, denoted $\mathcal{U}(G)$, and $G$ acts by conjugation on $\mathcal{U}(G)$.  In the Lie algebra setting, the nilpotent elements of $\mathfrak{g}:=\mathrm{Lie}(G)$ also form an irreducible closed subvariety of $\mathfrak{g}$, denoted $\mathcal{N}(\mathfrak{g})$, and $\mathcal{N}(\mathfrak{g})$ is a $G$-variety under the adjoint action of $G$ on $\mathfrak{g}$.  The main tool we will use to translate information between the group and Lie algebra settings will be a well-behaved Springer isomorphism.

\begin{definition}
A \emph{Springer isomorphism} is a $G$-equivariant isomorphism of algebraic varieties $\sigma:\mathcal{N}(\mathfrak{g})\to\mathcal{U}(G)$.
\end{definition}

\noindent In \cite{Springer}, Springer shows that Springer isomorphisms exist when $p$ is very good, but a note of Serre in (\cite{McNinch}, \S 10) mentions that in general they are not unique.  In fact, they are parametrized by a variety of dimension equal to $\mathrm{rank}(G)$.

\begin{example}[\cite{Sobaje}, \S 3]\label{TruncExp}
Let $G=SL_n$.  Then the Springer isomorphisms are parameterized by the variety $a_1\ne 0$ in $\mathbb{A}^{n-1}$ by $\sigma_{(a_1,\ldots,a_{n-1})}(X)=1+a_1X+\ldots+a_{n-1}X^{n-1}$ where $X^n=0$.  It follows that different Springer isomorphisms can behave very differently.  Here, since $p\ge h(SL_n)=n$, we have the particularly nice choice of Springer isomorphism 
\[
\sigma(X)=1+X+\frac{X^2}{2!}+\ldots+\frac{X^{p-1}}{(p-1)!}
\]
This is just the truncated exponential series, which we will denote $\exp$.
\end{example}

The truncated exponential series considered in Example \ref{TruncExp} has the following convenient property for $p\ge n$:
\begin{equation}\label{SumToProd}
[X,Y]=0\quad\Longrightarrow\quad\exp(X+Y)=\exp(X)\exp(Y)
\end{equation}
We give a brief proof of \eqref{SumToProd}.  Serre records in (\cite{Serre}, (4.1.7)) that $\exp(X)\exp(Y)=\exp(X+Y-W_p(X,Y))$ for two commuting elements $X,Y$ and arbitrary $p$, where $W_p(X,Y)=\frac{1}{p}((X+Y)^p-X^p-Y^p)$.  For $p\ge n$, we have $X^p=Y^p=(X+Y)^p=0$ so that $W_p(X,Y)=0$ and we recover \eqref{SumToProd}.

Proposition $5.3$ in \cite{Seitz} states for any parabolic subgroup $P$ in $G$ whose unipotent radical $U_P$ has nilpotence class less than $p$, there is a unique $P$-equivariant isomorphism $\varepsilon_P:\mathrm{Lie}(U_P)=\mathfrak{u}_P\to U_P$ satisfying the following conditions.

\begin{enumerate}
\item $\varepsilon_P$ is an isomorphism of algebraic groups, where $\mathfrak{u}_P$ has the structure of an algebraic group via the Baker-Campbell-Hausdorff formula (notice the condition on the nilpotence class of $\mathfrak{u}_P$ is required for this group law to make sense).
\item The differential of $\varepsilon_P$ is the identity on $\mathfrak{u}_P$.
\end{enumerate}

In Theorem 3 of \cite{CLN}, the authors uniquely extend this isomorphism on $\mathfrak{u}_P$ to all of $\mathcal{N}(\mathfrak{g})$ for $G$ simple, with weaker conditions on $p$ than we consider in this paper.  Specifically, they require that $p$ is good, that $\mathcal{N}(\mathfrak{g})$ is normal and that $G_{sc}\to G$ is separable.  Condition $(1)$ of $(\star)$ implies that $p$ is good and that $\mathcal{N}(\mathfrak{g})$ is normal, and we've already noted that condition $(2)$ is equivalent to the separability of $G_{sc}\to G$.  We now show that under our assumptions on $p$, the result of Theorem $3$ in \cite{CLN} may be extended to reductive groups, and the canonical isomorphism obtained sends sums to products much like the truncated exponential (cf \eqref{SumToProd}).

\begin{theorem}\label{CanonSpringer}
For $G$ a connected, reductive algebraic group, and for $p$ satisfying condition $(\star)$, there is a (necessarily) unique Springer isomorphism $\sigma:\mathcal{N}(\mathfrak{g})\to\mathcal{U}(G)$ which restricts to the canonical isomorphism of \cite{Seitz} on all $\mathfrak{u}_P$ for $P$ any parabolic subgroup of $G$.  This Springer isomorphism has the following properties:
\begin{enumerate}
\item $[X,Y]=0$ if and only if $(\sigma(X),\sigma(Y))=e$.
\item If $G$ is defined over $\mathbb{F}_p$, then so is $\sigma$.
\item If $[X,Y]=0$, then $\sigma(X+Y)=\sigma(X)\sigma(Y)$.
\end{enumerate}
\end{theorem}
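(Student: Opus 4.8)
The plan is to construct $\sigma$ by descent from the simple, simply connected case --- where the analogous statement is essentially Theorem 3 of \cite{CLN} --- and then to read off properties (1)--(3) from the fact that $\sigma$ agrees on each $\mathfrak u_P$ with the group isomorphism $\varepsilon_P$ of \cite{Seitz}. First I would reduce to the semisimple derived group and then to its universal cover. Writing $G'=(G,G)$ and $\pi\colon G'_{sc}=\prod_i G_i\to G'$ for the universal cover (a product of simple simply connected $G_i$), condition (2) of $(\star)$ makes $\pi$ a separable central isogeny with finite \'etale kernel of order prime to $p$; hence $d\pi$ is an isomorphism of Lie algebras and $\pi$, $d\pi$ restrict to isomorphisms $\mathcal U(G'_{sc})\xrightarrow{\sim}\mathcal U(G')$ and $\mathcal N(\mathfrak g'_{sc})\xrightarrow{\sim}\mathcal N(\mathfrak g')$. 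Since the central torus of $G$ contributes no nilpotent or unipotent elements, $\mathcal N(\mathfrak g)=\mathcal N(\mathfrak g')$ and $\mathcal U(G)=\mathcal U(G')$, and $G$-equivariance coincides with $G'$-equivariance on these varieties. Condition (1) of $(\star)$ gives $p\ge h\ge h(G_i)$, so $p$ is good for each $G_i$, $\mathcal N(\mathfrak g_i)$ is normal, and every $U_P$ has nilpotence class $<p$ (so that $\varepsilon_P$ is defined); Theorem 3 of \cite{CLN} then gives the canonical Springer isomorphism $\sigma_i$ for $G_i$, and $\sigma_{sc}:=\prod_i\sigma_i$ is a Springer isomorphism for $G'_{sc}$ restricting to $\varepsilon_P$ on $\mathfrak u_P$ for every parabolic $P\le G'_{sc}$ (such $P$ decomposes as $\prod_i P_i$, and $\prod_i\varepsilon_{P_i}$ satisfies Seitz's characterizing conditions, hence equals $\varepsilon_P$). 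I then set
\[
\sigma:=\pi\circ\sigma_{sc}\circ(d\pi)^{-1}\colon\ \mathcal N(\mathfrak g)=\mathcal N(\mathfrak g')\longrightarrow\mathcal U(G')=\mathcal U(G),
\]
a $G$-equivariant isomorphism.

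Next I would verify the asserted compatibility and uniqueness. For any parabolic $P\le G$ one has $\mathfrak u_P=\mathfrak u_{P\cap G'}$, and $\pi$ (resp.\ $d\pi$) restricts to an isomorphism $U_{\pi^{-1}(P\cap G')}\xrightarrow{\sim}U_{P\cap G'}$ (resp.\ on Lie algebras), since $\ker\pi$ is central and meets the unipotent radical of a parabolic trivially. Transporting Seitz's characterization through these isomorphisms and invoking its uniqueness shows $\pi\circ\varepsilon_{\pi^{-1}(P\cap G')}\circ(d\pi)^{-1}=\varepsilon_P$ on $\mathfrak u_P$; combined with the previous paragraph this gives $\sigma|_{\mathfrak u_P}=\varepsilon_P$ for every parabolic $P\le G$. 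Uniqueness of $\sigma$ is then immediate: two Springer isomorphisms agreeing with $\varepsilon_B$ on $\mathfrak u_B$ for a single Borel $B$ must agree on $G\cdot\mathfrak u_B$ by $G$-equivariance, and $G\cdot\mathfrak u_B$ is dense in $\mathcal N(\mathfrak g)$. For property (2), if $G$ is defined over $\mathbb F_p$ then $G$ is quasi-split over $\mathbb F_p$, so some Borel $B$ is defined over $\mathbb F_p$; by its uniqueness $\varepsilon_B$ is $F$-stable, whence $F^{-1}\circ\sigma\circ F$ is a Springer isomorphism agreeing with $\varepsilon_B$ on $\mathfrak u_B$, so it equals $\sigma$ and $\sigma$ is defined over $\mathbb F_p$.

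For properties (1) and (3) I would argue as follows. If $[X,Y]=0$, then $kX+kY$ is an abelian subalgebra consisting of nilpotent elements, so by Theorem 2.2 of \cite{LMT} it lies in $\mathfrak u_B$ for some Borel $B$; on $\mathfrak u_B$, of nilpotence class $\le h-1<p$, the map $\sigma=\varepsilon_B$ is an isomorphism of algebraic groups from $(\mathfrak u_B,\ast)$, the group with Baker--Campbell--Hausdorff multiplication, onto $U_B$. As $[X,Y]=0$ the BCH product reduces to addition, so $X\ast Y=X+Y=Y\ast X$, and applying $\varepsilon_B$ yields $\sigma(X+Y)=\sigma(X)\sigma(Y)$ (property (3)) and in particular $(\sigma(X),\sigma(Y))=e$. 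Conversely, if $(\sigma(X),\sigma(Y))=e$ then $\sigma(X)$ and $\sigma(Y)$ generate a subgroup consisting of unipotent elements, hence lie in $U_B$ for a common Borel $B$; then $X,Y\in\sigma^{-1}(U_B)=\mathfrak u_B$ and commute in $(\mathfrak u_B,\ast)$. Let $\mathfrak m=\langle X,Y\rangle$ be the Lie subalgebra they generate, with lower central series $\mathfrak m=\mathfrak m_1\supseteq\mathfrak m_2\supseteq\cdots$; the BCH expansion of the group commutator of $X$ and $Y$ in $(\mathfrak m,\ast)$ equals $[X,Y]$ plus a sum of iterated brackets of length $\ge 3$, all of which lie in $\mathfrak m_3$. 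As this commutator is $0$, we get $[X,Y]\in\mathfrak m_3$, so $\mathfrak m_2\subseteq\mathfrak m_3\subseteq\cdots$, which forces $\mathfrak m_2=0$ by nilpotence; hence $[X,Y]=0$.

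The main obstacle is the appeal to Theorem 2.2 of \cite{LMT}, together with its group-theoretic counterpart that any subgroup of $G$ consisting of unipotent elements lies in the unipotent radical of a Borel subgroup: the statement that commuting nilpotent elements lie in a common Borel subalgebra is false in general bad characteristic, and it is precisely to have this tool available that we impose $(\star)$. Everything else --- the descent of the isomorphisms of \cite{CLN} and \cite{Seitz} along $\pi$ and the product decomposition, and the elementary BCH and lower-central-series bookkeeping --- is routine.
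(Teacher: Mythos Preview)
Your argument is correct and follows essentially the same route as the paper: reduce from $G$ to $G'=(G,G)$, pass along the separable isogeny to a product of simple groups, invoke Theorem~3 of \cite{CLN} there, and then obtain property~(3) by using Theorem~2.2 of \cite{LMT} to place commuting nilpotents in a common $\mathfrak u_B$ where $\sigma=\varepsilon_B$ and BCH collapses to addition. The paper simply cites \cite{CLN} for properties~(1) and~(2) and for uniqueness, whereas you supply explicit arguments: density of $G\cdot\mathfrak u_B$ for uniqueness, Frobenius transport plus uniqueness of $\varepsilon_B$ for~(2), and a pleasant lower-central-series argument for the converse direction of~(1); these are genuine additions but not a different strategy. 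One small notational caution: writing ``$F^{-1}\circ\sigma\circ F$'' is slightly informal since the geometric Frobenius is not an isomorphism of varieties; the clean formulation is that $\sigma$ and its Frobenius twist $\sigma^{(p)}$ are both Springer isomorphisms restricting to $\varepsilon_B$ (the latter because $\varepsilon_B$, being uniquely characterized by $\mathbb F_p$-rational data, is itself defined over $\mathbb F_p$), hence coincide.
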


\begin{proof}
First, we note that $\mathcal{U}(G)\subset G'=(G,G)$ and $\mathcal{N}(\mathfrak{g})\subset [\mathfrak{g},\mathfrak{g}]\subset \operatorname{Lie}(G')$, so that an isomorphism of the nilpotent and unipotent varieties of $G'$ is also one for $G$.  Furthermore, since $G$ is the product of $Z(G)$ and $G'$, a $G'$-equivariant map $\mathcal{N}(\mathfrak{g})\to\mathcal{U}(G)$ is also $G$-equivariant map.  Hence, we may assume that $G=G'$ is semisimple.

If $G$ is semisimple, there is an isogeny $H\to G$ where $H$ is a product of simple groups.  By our assumptions on $p$, $H\to G$ is separable, so that the induced map on Lie algebras is an isomorphism.  Hence we may assume that $G$ is a product of simple groups.

For simple groups $G$ (and hence products of simple groups) Theorem 3 of \cite{CLN} states that there is a unique Springer isomorphism with properties $1$ and $2$ which restricts to the canonical isomorphism of \cite{Seitz} for all parabolic subgroups $P$ whose unipotent radical has nilpotence class less than $p$.  Since $p\ge h$, all parabolic subgroups satisfy this criteria.

Finally, to see that $\sigma$ has property $3$, suppose $X$ and $Y$ are commuting nilpotent elements.  Theorem $2.2$ in \cite{LMT} states that there is some Borel subgroup $B\subset G$ with unipotent radical $U$ such that $X,Y\in\mathrm{Lie}(U).$  Since $\sigma$ restricts to the canonical isomorphism on $\mathfrak{u}_B$, it follows that $\sigma(X * Y)=\sigma(X)\sigma(Y)$, where $*$ is the group operation defined by the Baker-Campbell-Hausdorff formula, which for commuting elements satisfies $X*Y=X+Y$.  Property $3$ follows.
\end{proof}

We will use the canonical Springer isomorphism $\sigma$ to study the projective variety $\mathbb{E}(r,\mathfrak{g})$, as defined in \cite{CFP}.  The following discussion is relevant for an arbitrary restricted Lie algebra $(\mathfrak{g},[\cdot,\cdot],(\cdot)^{[p]})$, but we are only concerned with the case $\mathfrak{g}=\mathrm{Lie}(G)$.

\begin{definition}[\cite{CFP}, Definition 1.2]
An \emph{elementary subalgebra} $\epsilon\subset\mathfrak{g}$ is an abelian Lie subalgebra of $\mathfrak{g}$ with trivial restriction, i.e., $x^{[p]}=0$ for all $x\in \epsilon$.
\end{definition}

Let $\mathbb{E}(r,\mathfrak{g})$ be the set of elementary subalgebras of rank $r$ in $\mathfrak{g}$.  Considering $\epsilon\hookrightarrow\mathfrak{g}$ as an inclusion of vector spaces, there is an embedding $\mathbb{E}(r,\mathfrak{g})\hookrightarrow \operatorname{Grass}(r,\mathfrak{g})$.  This is a closed embedding so that $\mathbb{E}(r,\mathfrak{g})$ has the structure of a projective subvariety of $\operatorname{Grass}(r,\mathfrak{g})$ (\cite{CFP}, Proposition 1.3).  If $\mathfrak{g}$ is the Lie algebra of an algebraic group $G$, then $\mathbb{E}(r,\mathfrak{g})$ is a $G$-variety via the adjoint action of $G$ on $\mathfrak{g}$.  Specifically, for any $\epsilon\in\mathbb{E}(r,\mathfrak{g})$ and any $g\in G$, the image of $\epsilon$ under $\mathrm{Ad}_g:\mathfrak{g}\to\mathfrak{g}$ is elementary of rank $r$.

We note for later purposes the following construction, which appears in Proposition 1.3 of \cite{CFP} and its proof.  Let $\mathcal{C}_r(\mathcal{N}(\mathfrak{g}))^{\circ}$ denote the variety of $r$-tuples of pairwise-commuting, nilpotent, linearly independent elements of $\mathfrak{g}$.  By taking the $k$-span of elements in an $r$-tuple, any $(X_1,\ldots,X_r)\in\mathcal{C}_r(\mathcal{N}(\mathfrak{g}))^{\circ}$ defines an elementary subalgebra of rank $r$, so there is a map of algebraic varieties $\mathcal{C}_r(\mathcal{N}(\mathfrak{g}))^{\circ}\twoheadrightarrow \mathbb{E}(r,\mathfrak{g})$.

\section{$\mathbb{F}_q$-expressability and $\mathbb{F}_q$-rational points of $\mathbb{E}(r,\mathfrak{g})$}\label{Rationality}

The following definitions are motivated by (\cite{CLN},\S 3).  In all that follows we suppose that $G$ has a fixed $\mathbb{F}_p$-structure, i.e. $G=G_0\times_{\mathbb{F}_p}\mathrm{Spec}\;k$ for some fixed algebraic group $G_0$ over $\mathbb{F}_p$.  It follows that $\mathfrak{g}:=\text{Lie}(G)$ has an $\mathbb{F}_p$-structure coming from $\mathfrak{g}_0:=\mathrm{Lie}(G_0)$ given by $\mathfrak{g}=\mathfrak{g}_0\times_{\mathbb{F}_p}\mathrm{Spec}\;k$.  For $q=p^d$, by abuse of notation we write $G(\mathbb{F}_q)$ (resp. $\mathfrak{g}(\mathbb{F}_q))$ to denote the $\mathbb{F}_q$-rational points of $G_0$ (resp. $\mathfrak{g}_0)$.  For another point of view, we may consider $G(\mathbb{F}_q)$ to be the subgroup consisting of all $k$-points of $G$ obtained from the base-change of an $\mathbb{F}_q$-point of $G_0$ (and similarly for the Lie algebra).  Here, we view the vector space $\mathfrak{g}$ as a scheme over $k$ via the following standard construction.  Given a finite dimensional vector space $V$ over $k$, give $V$ the structure of a linear scheme over $k$ with coordinate algebra $S^*(V^{\#})$.  Then the $k$-points of $V$ with this scheme structure are naturally identified with the elements of the vector space $V$.  In particular, in the setting just discussed, $\mathfrak{g}(\mathbb{F}_q)\cong \mathfrak{g}_0$.

\begin{definition}[\cite{CLN},\S 3, Definition 1]
An element $X\in\mathfrak{g}$ is \emph{$\mathbb{F}_q$-expressible} if it can be written as $X=\sum c_iX_i$ with $c_i\in k$, $X_i\in\mathfrak{g}(\mathbb{F}_q)$, $X_i^{[p]}=0=[X_i,X_j].$
\end{definition}

\noindent This definition can be extended to the notion of an $\mathbb{F}_q$-expressible subalgebra.

\begin{definition}\label{FpExpressible}
We call an elementary subalgebra $\epsilon$ an \emph{$\mathbb{F}_q$-expressible subalgebra}, if it has a basis of the form $\{X_1,\ldots,X_r\}\subset\mathfrak{g}(\mathbb{F}_q)$, i.e., $\epsilon=\epsilon(\mathbb{F}_q)\otimes_{\mathbb{F}_q}k$.
\end{definition}

\noindent To speak of $\mathbb{F}_q$-rational points of $\mathbb{E}(r,\mathfrak{g})$, we require a rationality condition on $\mathfrak{g}$.  Notice that if $\mathfrak{g}$ has an $\mathbb{F}_p$-structure given by $\mathfrak{g}=\mathfrak{g}_0\otimes_{\mathbb{F}_p}k$, then $\mathbb{E}(r,\mathfrak{g})$ is defined over $\mathbb{F}_p$.  This can be seen as follows.  Fix an embedding $\mathfrak{g}_0\hookrightarrow\mathfrak{gl}_n(\mathbb{F}_p)$.  Then $\mathfrak{g}\hookrightarrow\mathfrak{gl}_n$ is determined by linear equations with coefficients in $\mathbb{F}_p$.  The equations defining the nilpotent, commuting, and linearly independent conditions are all homogeneous polynomials with coefficients in $\mathbb{F}_p$ as well.

We claim that the $\mathbb{F}_q$-rational points of $\mathbb{E}(r,\mathfrak{g})$ are precisely the $\mathbb{F}_q$-expressible subalgebras of $\mathfrak{g}$.  To see this, notice that $\mathbb{E}(r,\mathfrak{g})(\mathbb{F}_q)=\mathbb{E}(r,\mathfrak{g})\cap\mathrm{Grass}(r,\mathfrak{g})(\mathbb{F}_q)$.  The claim follows from the fact that the $\mathbb{F}_q$-rational points of the Grassmannian are those $r$-planes with a basis in $\mathfrak{g}(\mathbb{F}_q)$.

\section{The Category of $\mathbb{F}_q$-expressible Elementary Subalgebras}\label{AnIsoOfCat}

We begin by recalling the category of elementary abelian $p$-subgroups of a finite group, first considered by Quillen.  For $g\in G$, let $c_g:G\to G$ be defined by $c_g(h)=ghg^{-1}$

\begin{definition}\label{CatOfSubGrp}
Let $\Gamma$ be a finite group, and let $p$ be a prime dividing the order of $\Gamma$.  Define $\mathcal{C}_{\Gamma}$ to be the category whose objects are the elementary abelian $p$-subgroups of $\Gamma$, and whose morphisms are group homomorphisms $E\to E'$ which can be written as the composition of an inclusion followed by $c_g$ for some $g\in \Gamma$.  In particular we have a morphism $E\to E'$ if and only if $E$ is conjugate to a subgroup of $E'$.
\end{definition}

Motivated by Definition \ref{CatOfSubGrp}, we make similar definitions for restricted Lie algebras.

\begin{definition}\label{CatOfSubAlg}
\leavevmode
\begin{enumerate}
\item Let $G$ be an algebraic group defined over $k$, and let $\mathfrak{g}=\operatorname{Lie}(G)$.  Define $\mathcal{C}_{\mathfrak{g}}$ to be the category whose objects are the elementary abelian subalgebras of $\mathfrak{g}$, and whose morphisms are inclusions followed by $\operatorname{Ad}_g$ for $g\in G$.
\item Inside of $\mathcal{C}_{\mathfrak{g}}$, let $\mathcal{C}_{\mathfrak{g}}(\mathbb{F}_q)$ be the subcategory whose objects are $\mathbb{F}_q$-expressible subalgebras, and whose morphisms are inclusions composed with $\operatorname{Ad}_g$ for $g\in G(\mathbb{F}_q)$.
\end{enumerate}
\end{definition}

The following theorem further emphasizes that elementary subalgebras of a restricted Lie algebra are the appropriate cohomological and representation theoretic analog to elementary abelian subgroups of a finite group.  The existence of the canonical Springer isomorphism is used in the proofs of this section, so throughout we assume $p$ satisfies condition $(\star)$.

\begin{theorem}\label{IsoCat}
Let $G$ be a reductive, connected group.  Then for $q=p^d$, the category $\mathcal{C}_{\mathfrak{g}}(\mathbb{F}_q)$ is isomorphic to a subcategory of $\mathcal{C}_{G(\mathbb{F}_q)}$.  For $d=1$, Quillen's category $\mathcal{C}_{G(\mathbb{F}_p)}$ is isomorphic to $\mathcal{C}_{\mathfrak{g}}(\mathbb{F}_p)$.
\end{theorem}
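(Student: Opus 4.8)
The plan is to build an explicit functor $F\colon \mathcal{C}_{\mathfrak{g}}(\mathbb{F}_q)\to\mathcal{C}_{G(\mathbb{F}_q)}$ out of the canonical Springer isomorphism $\sigma$ of Theorem~\ref{CanonSpringer}, and then to verify that $F$ is injective on objects and faithful; its image is then a subcategory of $\mathcal{C}_{G(\mathbb{F}_q)}$ isomorphic to $\mathcal{C}_{\mathfrak{g}}(\mathbb{F}_q)$. For the second assertion one shows in addition that when $q=p$ the functor $F$ is full and surjective on objects, hence an isomorphism of categories.

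\emph{Construction and the first statement.} On objects, send an $\mathbb{F}_q$-expressible subalgebra $\epsilon$ (with $\mathbb{F}_q$-form $\epsilon(\mathbb{F}_q)=\epsilon\cap\mathfrak{g}(\mathbb{F}_q)$) to $F(\epsilon):=\sigma(\epsilon(\mathbb{F}_q))$. Every element of $\epsilon$ lies in $\mathcal{N}(\mathfrak{g})$ — elements of an elementary subalgebra are nilpotent, by the Jordan decomposition in $\mathfrak{g}$ (the condition $x^{[p]}=0$ forces the semisimple part of $x$ to vanish) — so $\sigma$ is defined on $\epsilon$. Since the elements of $\epsilon$ pairwise commute, property~(3) of $\sigma$ makes $\sigma|_{\epsilon(\mathbb{F}_q)}\colon(\epsilon(\mathbb{F}_q),+)\to G$ a group homomorphism, injective because $\sigma$ is; moreover $\sigma(X)^p=\sigma(pX)=\sigma(0)=e$ as $pX=0$, and property~(2) places the image in $G(\mathbb{F}_q)$. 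Thus $F(\epsilon)$ is an elementary abelian $p$-subgroup of $G(\mathbb{F}_q)$ isomorphic to $(\epsilon(\mathbb{F}_q),+)\cong(\mathbb{Z}/p)^{rd}$, i.e.\ of rank $rd$ when $\epsilon$ has rank $r$. On morphisms, a map $\epsilon\to\epsilon'$ is $\mathrm{Ad}_g|_\epsilon$ with $\mathrm{Ad}_g(\epsilon)\subseteq\epsilon'$ and $g\in G(\mathbb{F}_q)$; $G$-equivariance of $\sigma$ gives $\sigma\circ\mathrm{Ad}_g=c_g\circ\sigma$, so $c_g$ carries $F(\epsilon)$ into $F(\epsilon')$, and we set $F(\mathrm{Ad}_g|_\epsilon):=c_g|_{F(\epsilon)}$. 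This is independent of the chosen $g$ because the function $c_g|_{F(\epsilon)}=\sigma\circ\mathrm{Ad}_g|_{\epsilon(\mathbb{F}_q)}\circ\sigma^{-1}$ depends only on $\mathrm{Ad}_g|_\epsilon$; functoriality follows from $c_{hg}=c_h\circ c_g$; injectivity on objects holds because $\sigma$ is a bijection, so $\sigma(\epsilon(\mathbb{F}_q))=\sigma(\epsilon'(\mathbb{F}_q))$ forces $\epsilon(\mathbb{F}_q)=\epsilon'(\mathbb{F}_q)$, hence $\epsilon=\epsilon'$; and faithfulness follows by applying $\sigma^{-1}$ to an identity $c_g|_{F(\epsilon)}=c_h|_{F(\epsilon)}$ and using $k$-linearity of $\mathrm{Ad}_g,\mathrm{Ad}_h$ on $\epsilon$. (Injective-on-objects plus faithful is exactly what is needed for the image to be a subcategory isomorphic to $\mathcal{C}_{\mathfrak{g}}(\mathbb{F}_q)$.)

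\emph{The case $d=1$.} Here we must also invert $F$ on objects. Let $E\subseteq G(\mathbb{F}_p)$ be an elementary abelian $p$-subgroup. Each nontrivial element of $E$ has order $p$, hence is unipotent (its semisimple Jordan part has order prime to $p$ and dividing $p$, so is trivial), and also lies in $G(\mathbb{F}_p)$; since $\sigma$ is defined over $\mathbb{F}_p$ (property~2), $\sigma^{-1}(E)\subseteq\mathcal{N}(\mathfrak{g})\cap\mathfrak{g}(\mathbb{F}_p)$. As $E$ is abelian, property~(1) of $\sigma$ shows the elements of $\sigma^{-1}(E)$ pairwise commute, and property~(3) shows $\sigma^{-1}(uv)=\sigma^{-1}(u)+\sigma^{-1}(v)$ and $\sigma^{-1}(u^{-1})=-\sigma^{-1}(u)$ for $u,v\in E$; thus $\sigma^{-1}(E)$ is a finite additive subgroup of $\mathfrak{g}(\mathbb{F}_p)$, i.e.\ an $\mathbb{F}_p$-subspace, spanned by pairwise-commuting nilpotents. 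Its $k$-span $\epsilon$ is then an elementary subalgebra (again via $\mathcal{C}_r(\mathcal{N}(\mathfrak{g}))^{\circ}\twoheadrightarrow\mathbb{E}(r,\mathfrak{g})$), manifestly $\mathbb{F}_p$-expressible, and comparing $|\epsilon(\mathbb{F}_p)|=p^{\dim_k\epsilon}=|\sigma^{-1}(E)|=|E|$ with $\sigma^{-1}(E)\subseteq\epsilon(\mathbb{F}_p)$ gives $F(\epsilon)=\sigma(\sigma^{-1}(E))=E$. Fullness is the same computation once more: a morphism $c_g|_E\colon E\to E'$ with $g\in G(\mathbb{F}_p)$ pulls back under $\sigma^{-1}$ to $\mathrm{Ad}_g$ carrying $\epsilon(\mathbb{F}_p)=\sigma^{-1}(E)$ into $\sigma^{-1}(E')=\epsilon'(\mathbb{F}_p)$, hence a morphism $\epsilon\to\epsilon'$ with $F$-image $c_g|_E$.

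\emph{The main obstacle.} The crux is surjectivity of $F$ on objects when $d=1$, and within it the closure of $\sigma^{-1}(E)$ under addition: this is precisely where both the ``commutator $\Leftrightarrow$ bracket'' equivalence (property~1) and the ``sum-to-product'' identity (property~3) of the \emph{canonical} $\sigma$ are indispensable — a generic Springer isomorphism would not do. The argument does not extend to $d>1$ because $\sigma^{-1}(E)$ is then only $\mathbb{F}_p$-linear, not $\mathbb{F}_q$-linear, so the $k$-span $\epsilon$ has $|\epsilon(\mathbb{F}_q)|=q^{\dim_k\epsilon}>|E|$ and $F(\epsilon)\supsetneq E$; only the $\mathbb{F}_q$-linear subgroups of Definition~\ref{FqLinear} lie in the image of $F$.
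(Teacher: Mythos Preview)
Your proof is correct and follows essentially the same approach as the paper: both construct the functor $\epsilon\mapsto\sigma(\epsilon(\mathbb{F}_q))$, $\mathrm{Ad}_g\mapsto c_g$ from the canonical Springer isomorphism, check it lands in elementary abelian $p$-subgroups of $G(\mathbb{F}_q)$ via properties (2) and (3) of $\sigma$, and for $d=1$ invert it on objects by showing $\sigma^{-1}(E)$ is an $\mathbb{F}_p$-subspace. Your treatment is in fact more careful on several points the paper leaves implicit (well-definedness on morphisms, faithfulness, fullness for $d=1$); one small remark is that the functor is already \emph{fully} faithful for arbitrary $q$ (a morphism $c_g\colon F(\epsilon)\to F(\epsilon')$ with $g\in G(\mathbb{F}_q)$ pulls back via $\sigma^{-1}$ and base change to $\mathrm{Ad}_g\colon\epsilon\to\epsilon'$), though your weaker claim of faithfulness plus injectivity on objects already suffices for the statement as written.
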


\begin{proof}
Let $\sigma$ be the canonical Springer isomorphism from Theorem \ref{CanonSpringer}, and define a fully faithful functor $\mathcal{F}:\mathcal{C}_{\mathfrak{g}}(\mathbb{F}_q)\to\mathcal{C}_{G(\mathbb{F}_q)}$ as follows.
\[
\begin{aligned}
\mathcal{F}(\epsilon)=&\sigma(\epsilon(\mathbb{F}_q))\\
\mathcal{F}(\epsilon\hookrightarrow \epsilon')=&\sigma(\epsilon(\mathbb{F}_q))\hookrightarrow\sigma(\epsilon'(\mathbb{F}_q))\\
\mathcal{F}(\mathrm{Ad}_g)=&c_g
\end{aligned}
\]
where we note that if $\epsilon\subset\epsilon'$, then $\sigma(\epsilon(\mathbb{F}_q))\subset\sigma(\epsilon'(\mathbb{F}_q))$.  Since $\sigma$ is defined over $\mathbb{F}_p$, we have $\sigma(\epsilon(\mathbb{F}_q))\subset G(\mathbb{F}_q)$.  The fact that $\sigma(\epsilon(\mathbb{F}_q))$ is an elementary abelian $p$-group in $G(\mathbb{F}_q)$ follows from property $3$ of Theorem \ref{CanonSpringer}. Notice if $\epsilon$ has dimension $r$, then $|\epsilon(\mathbb{F}_q)|=q^r=p^{rd}$, so that $\sigma(\epsilon(\mathbb{F}_q))$ has rank $rd$.  That $\mathcal{F}$ is fully faithful follows from the definition of the morphisms in the categories $\mathcal{C}_{\mathfrak{g}}(\mathbb{F}_q)$ and $\mathcal{C}_{G(\mathbb{F}_q)}$.

In the case $d=1$, it remains to show that $\mathcal{F}$ is surjective on objects.  Let $E$ be any elementary abelian subgroup of rank $r$ in $G(\mathbb{F}_p)$.  I claim that $V=\sigma^{-1}(E)\subset\mathfrak{g}(\mathbb{F}_p)$ is an $r$-dimensional subspace over $\mathbb{F}_p$.  That $V$ is closed under addition follows from properties $1$ and $3$ of Theorem \ref{CanonSpringer}.  If $\lambda\in\mathbb{F}_p$, then $\lambda\sigma^{-1}(g)=\sigma^{-1}(g^{\lambda})$, so that $V$ is closed under $\mathbb{F}_p$-scalar multiplication.  Since $V$ has $p^r$ elements, it follows that it is an  $\mathbb{F}_p$-subspace of dimension $r$.  Then $\epsilon=V\otimes_{\mathbb{F}_p}k$ is an $r$-dimensional $k$-space such that $\mathcal{F}(\epsilon)=E$.
\end{proof}

Notice that in the proof of Theorem \ref{IsoCat}, the symbol $g^{\lambda}$ is meaningless for $\lambda\in\mathbb{F}_q\setminus\mathbb{F}_p$, which is why $\mathcal{F}$ is only an isomorphism for $d=1$.  The following example shows why $\mathcal{F}$ fails to be surjective for $d>1$.

\begin{example}\label{NotLinear}
Let $G=\mathrm{SL}_3$, let $d=2$, and let $\lambda\in\mathbb{F}_q\setminus\mathbb{F}_p$.  In this case, we have $\sigma (X)=I+X+\frac{1}{2}X^2$.  Consider the elementary abelian subgroup of rank $2$ defined as follows:
\[
E=\left\langle g=\begin{pmatrix}1&1&0\\0&1&0\\0&0&1\end{pmatrix},h=\begin{pmatrix}1&0&1\\0&1&0\\0&0&1\end{pmatrix}\right\rangle\subset G(\mathbb{F}_p)\subset G(\mathbb{F}_q)
\]
Then any subalgebra $\epsilon$ with $E\subset\mathcal{F}(\epsilon)$ must contain the elements $X=\lambda\sigma^{-1}(g)=\lambda(g-I)$ and $Y=\lambda\sigma^{-1}(h)=\lambda(h-I)$.  It follows that $I+X,I+Y\in\mathcal{F}(\epsilon)$, but $I+X,I+Y\notin E$, so that there is no subalgebra $\epsilon$ with $\mathcal{F}(\epsilon)=E$.
\end{example}

To determine the image of $\mathcal{F}$ for $d>1$, it will be helpful to define $g^{\lambda}:=\sigma(\lambda\sigma^{-1}(g))$ for $g\in \mathcal{U}(G)$ and $\lambda\in k$.  One can check using properties $1$ and $3$ of Theorem \ref{CanonSpringer} that the following familiar formulas hold for all $g,h,k\in \mathcal{U}(G)$ with $gh=hg$, and all $\lambda,\mu\in k$:

\begin{equation}\label{GenExp}
\begin{aligned}
g^{\lambda}\cdot g^{\mu}=&g^{\lambda+\mu}\\
(g^{\lambda})^{\mu}=&g^{\lambda\cdot\mu}\\
g^{\lambda}\cdot h^{\mu}=&h^{\mu}\cdot g^{\lambda}\\
(g\cdot k\cdot g^{-1})^{\lambda}=&g\cdot k^{\lambda}g^{-1}
\end{aligned}
\end{equation}

\begin{definition}\label{FqLinear}
Call an elementary abelian subgroup $E\subset G$ \emph{$\mathbb{F}_q$-linear} if $g^{\lambda}\in E$ for any $g\in E$ and any $\lambda\in\mathbb{F}_q$.
\end{definition}

Notice that any elementary abelian subgroup is $\mathbb{F}_p$-linear, as $g^{\lambda}$ is just given by the group operation in $E$ whenever $\lambda\in\mathbb{F}_p$.  Also notice that $E$ in Example \ref{NotLinear} is not $\mathbb{F}_q$-linear as $g^{\lambda}=I+X\notin E$.

\begin{lemma}\label{DivRank}
If $E\subset G$ is a finite $\mathbb{F}_q$-linear elementary abelian $p$-subgroup, then the rank of $E$ is divisible by $d$.
\end{lemma}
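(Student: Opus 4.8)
The plan is to turn the $\mathbb{F}_q$-linearity of $E$ into the statement that $E$ is a vector space over $\mathbb{F}_q$, whence its order is a power of $q = p^d$ and its rank (as an elementary abelian $p$-group) is a multiple of $d$. First I would set $V = \sigma^{-1}(E) \subset \mathcal{N}(\mathfrak{g})$, mimicking the $d=1$ argument in the proof of Theorem \ref{IsoCat}. Since $E$ is an elementary abelian $p$-subgroup, all its elements pairwise commute, so by property $1$ of Theorem \ref{CanonSpringer} the elements of $V$ pairwise bracket-commute; by property $3$ of Theorem \ref{CanonSpringer}, for $X, Y \in V$ we have $\sigma(X+Y) = \sigma(X)\sigma(Y) \in E$, so $V$ is closed under addition. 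The new ingredient beyond the $d=1$ case is scalar multiplication by all of $\mathbb{F}_q$: by the very definition of $g^\lambda = \sigma(\lambda\sigma^{-1}(g))$, the hypothesis $g^\lambda \in E$ for all $g \in E$, $\lambda \in \mathbb{F}_q$ says exactly that $\lambda X \in V$ for all $X \in V$, $\lambda \in \mathbb{F}_q$. Hence $V$ is closed under addition and under $\mathbb{F}_q$-scalar multiplication.

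The second step is to observe that $V$ is therefore an $\mathbb{F}_q$-subspace of $\mathfrak{g}$ — here I only need the ambient additive structure and the $\mathbb{F}_q$-action, both of which $V$ is closed under, and the zero vector lies in $V$ since $\sigma(0) = e \in E$. (One should note $V$ is finite because $E$ is finite and $\sigma$ is a bijection, so there is no subtlety about $V$ being a module versus a genuine subspace: a finite abelian group that is an $\mathbb{F}_q$-module is automatically an $\mathbb{F}_q$-vector space.) Let $s = \dim_{\mathbb{F}_q} V$. Then $|V| = q^s = p^{ds}$, and since $\sigma$ is a bijection $|E| = |V| = p^{ds}$, so the rank of $E$ equals $ds$, which is divisible by $d$.

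I do not expect a serious obstacle here; the statement is essentially a bookkeeping consequence of properties $1$ and $3$ of Theorem \ref{CanonSpringer} together with the formulas in \eqref{GenExp}. The one point that merits a sentence of care is the verification that $\lambda \mapsto (\text{multiplication by } \lambda \text{ on } V)$ is genuinely an $\mathbb{F}_q$-module structure, i.e. that $(g^\lambda)^\mu = g^{\lambda\mu}$ and $g^\lambda g^\mu = g^{\lambda+\mu}$ — but these are precisely the first two identities of \eqref{GenExp}, so they come for free. The only mild care needed is to make sure $V \neq 0$ is not being assumed: if $E$ is trivial then its rank is $0$, which is divisible by $d$, so that degenerate case is fine as well.
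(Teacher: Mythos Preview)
Your proof is correct, but it takes a different route from the paper's. The paper works entirely on the group side: it picks $g_1 \in E$, observes that $E_{g_1} := \{g_1^\lambda \mid \lambda \in \mathbb{F}_q\}$ is a rank-$d$ subgroup of $E$ by the identities in \eqref{GenExp}, then picks $g_2 \in E \setminus E_{g_1}$, notes $E_{g_1} \cap E_{g_2} = \{e\}$, and iterates to obtain a decomposition $E = E_{g_1} \times \cdots \times E_{g_r}$ of rank $rd$. This is essentially the process of choosing an $\mathbb{F}_q$-basis, carried out on the group side without ever invoking $\sigma^{-1}$. Your approach instead pulls back via $\sigma^{-1}$ to exhibit $V$ as an honest $\mathbb{F}_q$-subspace of $\mathfrak{g}$ and then simply counts. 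Your argument is slightly more direct and in fact anticipates the proof of Proposition~\ref{Subcat}, where the paper does exactly this pull-back to show that the image of $\mathcal{F}$ consists of the $\mathbb{F}_q$-linear subgroups. What the paper's version buys is the explicit product decomposition $E = E_{g_1} \times \cdots \times E_{g_r}$, which is used immediately afterward in the remark motivating the terminology ``$\mathbb{F}_q$-linear''.
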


\begin{proof}
Choose any $g_1\in E$, and let $E_{g_1}=\{g_1^{\lambda}\mid\lambda\in\mathbb{F}_q\}$.  Then, $E_{g_1}\subset E$ by $\mathbb{F}_q$-linearity of $E$.  Also, using \eqref{GenExp} we see that $E_{g_1}$ is an $\mathbb{F}_q$-linear elementary abelian subgroup of rank $d$.  Choose $g_2\in E\setminus E_{g_1}$, and note that the $\mathbb{F}_q$-linearity of $E_{g_1}$ ensures that $E_{g_2}\cap E_{g_1}=e$.  Thus $E_{g_1}\times E_{g_2}\subset E$ is $\mathbb{F}_q$-linear of rank $2d$.  Since $E$ is finite, this process stops, and so there is a generating set $g_1,\ldots,g_r$ such that $E=E_{g_1}\times\ldots\times E_{g_r}$ has rank $rd$.
\end{proof}

\begin{remark}
The motivation behind the terminology ``$\mathbb{F}_q$-linear" can be seen in Lemma \ref{DivRank} and its proof.  Given any finite $\mathbb{F}_q$-linear elementary abelian subgroup $E$, there is a decomposition $E=E_{g_1}\times\ldots\times E_{g_r}$ for an appropriate choice of generators $g_1,\ldots,g_r$.  The rank of $E$ as an elementary abelian $p$-group is then $rd$.  Viewing $E_{g_i}$ as the ``span" of $g_i$, this decomposition is analogous to decomposing an $r$-dimensional $\mathbb{F}_q$-vector space into the direct sum of one dimensional subspaces spanned by vectors in a basis.
\end{remark}

\begin{proposition}\label{Subcat}
The image of $\mathcal{F}:\mathcal{C}_{\mathfrak{g}}(\mathbb{F}_q)\to\mathcal{C}_{G(\mathbb{F}_q)}$ is the full subcategory of $\mathbb{F}_q$-linear elementary abelian subgroups in $\mathcal{C}_{G(\mathbb{F}_q)}$.
\end{proposition}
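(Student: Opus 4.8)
The plan is to establish the two object-level inclusions and then deduce the statement about morphisms formally. Write $\mathcal{C}^{\mathrm{lin}}$ for the full subcategory of $\mathcal{C}_{G(\mathbb{F}_q)}$ whose objects are the $\mathbb{F}_q$-linear elementary abelian subgroups. Since $\mathcal{F}$ was already shown to be fully faithful in the proof of Theorem~\ref{IsoCat}, it suffices to prove (i) that $\mathcal{F}(\epsilon)\in\mathcal{C}^{\mathrm{lin}}$ for every object $\epsilon$ of $\mathcal{C}_{\mathfrak{g}}(\mathbb{F}_q)$, and (ii) that every object of $\mathcal{C}^{\mathrm{lin}}$ equals $\mathcal{F}(\epsilon)$ for some $\epsilon$. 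Granting (i) and (ii): for $\mathbb{F}_q$-linear $E,E'$, choose $\epsilon,\epsilon'$ with $\mathcal{F}(\epsilon)=E$ and $\mathcal{F}(\epsilon')=E'$; full faithfulness then shows every morphism $E\to E'$ in $\mathcal{C}_{G(\mathbb{F}_q)}$ is the image of a (unique) morphism $\epsilon\to\epsilon'$, so the image of $\mathcal{F}$ is exactly the subcategory $\mathcal{C}^{\mathrm{lin}}$.

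For (i), recall $\mathcal{F}(\epsilon)=\sigma(\epsilon(\mathbb{F}_q))$ and that $\epsilon(\mathbb{F}_q)$ is an $\mathbb{F}_q$-subspace of $\mathfrak{g}(\mathbb{F}_q)$. Given $g=\sigma(X)\in\mathcal{F}(\epsilon)$ with $X\in\epsilon(\mathbb{F}_q)$ and $\lambda\in\mathbb{F}_q$, the definition $g^{\lambda}=\sigma(\lambda\sigma^{-1}(g))=\sigma(\lambda X)$ together with $\lambda X\in\epsilon(\mathbb{F}_q)$ gives $g^{\lambda}\in\sigma(\epsilon(\mathbb{F}_q))=\mathcal{F}(\epsilon)$, so $\mathcal{F}(\epsilon)$ is $\mathbb{F}_q$-linear.

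For (ii), let $E\subset G(\mathbb{F}_q)$ be an $\mathbb{F}_q$-linear elementary abelian $p$-subgroup, of rank $rd$ by Lemma~\ref{DivRank}. Because $\sigma$ is defined over $\mathbb{F}_p$ (property $2$ of Theorem~\ref{CanonSpringer}), the set $V:=\sigma^{-1}(E)$ lies in $\mathcal{N}(\mathfrak{g})\cap\mathfrak{g}(\mathbb{F}_q)$. I claim $V$ is an $\mathbb{F}_q$-subspace. For $X,Y\in V$, the commuting elements $\sigma(X),\sigma(Y)\in E$ force $[X,Y]=0$ by property $1$ of Theorem~\ref{CanonSpringer}, whence $\sigma(X+Y)=\sigma(X)\sigma(Y)\in E$ by property $3$, i.e.\ $X+Y\in V$. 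For $X\in V$ and $\lambda\in\mathbb{F}_q$ we have $\sigma(\lambda X)=\sigma(X)^{\lambda}\in E$ exactly because $E$ is $\mathbb{F}_q$-linear, so $\lambda X\in V$. Thus $V$ is an $\mathbb{F}_q$-subspace, and $|V|=|E|=p^{rd}=q^{r}$ pins down $\dim_{\mathbb{F}_q}V=r$. Finally, every $X\in V$ is nilpotent, and under $(\star)$ nilpotent elements of $\mathfrak{g}$ have trivial $[p]$-restriction (e.g.\ via the inclusion $X\in\mathrm{Lie}(U)$ for a Borel $B$ with unipotent radical $U$ of nilpotence class $<p$); since $V$ is abelian and the restriction is $p$-semilinear on abelian subalgebras, the same holds on $\epsilon:=V\otimes_{\mathbb{F}_q}k$. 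Hence $\epsilon$ is an $\mathbb{F}_q$-expressible elementary subalgebra of rank $r$ with $\epsilon(\mathbb{F}_q)=V$, and $\mathcal{F}(\epsilon)=\sigma(V)=E$.

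The argument is essentially bookkeeping built on Theorem~\ref{CanonSpringer} and the formulas in \eqref{GenExp}. The only place the defining hypothesis of $\mathbb{F}_q$-linearity does real work is the closure of $V=\sigma^{-1}(E)$ under multiplication by all of $\mathbb{F}_q$ (not merely $\mathbb{F}_p$), and the role of Lemma~\ref{DivRank} is to convert the resulting $\mathbb{F}_q$-dimension into the rank statement $rd\leftrightarrow r$. I do not expect a genuine obstacle; the one point to be slightly careful about is checking that $\epsilon$ really is an \emph{elementary} subalgebra, which is why the fact that nilpotent elements have trivial restriction under $(\star)$ is invoked.
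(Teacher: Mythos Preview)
Your proof is correct and follows essentially the same route as the paper's: both directions hinge on showing that $V=\sigma^{-1}(E)$ is an $\mathbb{F}_q$-subspace, with $\mathbb{F}_q$-linearity of $E$ supplying exactly the closure under $\mathbb{F}_q$-scalar multiplication. You are somewhat more explicit than the paper in spelling out closure under addition, the verification that $\epsilon$ is elementary, and the morphism-level statement, but the argument is the same.
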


\begin{proof}
First, let $\epsilon$ be $\mathbb{F}_q$-expressible, and choose any $g\in E=\sigma(\epsilon(\mathbb{F}_q))$.  Write $g=\sigma(X)$ for $X\in\epsilon(\mathbb{F}_q)$ and let $\lambda\in\mathbb{F}_q$.  Then $g^{\lambda}=\sigma(X)^{\lambda}=\sigma(\lambda X)\in E$ so that $E$ is $\mathbb{F}_q$-linear.

Conversely, suppose $E\subset G(\mathbb{F}_q)$ is $\mathbb{F}_q$-linear.  We proceed as in the proof of the $d=1$ case in Theorem \ref{IsoCat}.  The $\mathbb{F}_q$-linearity hypothesis on $E$ is precisely what is needed to show that $V=\sigma^{-1}(E)$ is closed under $\mathbb{F}_q$-scalar multiplication.  Notice that for $\lambda\in\mathbb{F}_q$ and for $\sigma^{-1}(g)\in V$, we have $\lambda\sigma^{-1}(g)=\sigma^{-1}(g^{\lambda})\in V$ because $g^{\lambda}\in E$.  It follows that the $\mathbb{F}_q$-expressible subalgebra $\epsilon=V\otimes_{\mathbb{F}_q}k$ satisfies $\mathcal{F}(\epsilon)=E$, which completes the proof.
\end{proof}

\begin{corollary}\label{Bij}
There is a bijection between $\mathbb{F}_q$-expressible elementary subalgebras of $\mathfrak{g}=\mathrm{Lie}(G)$ of rank $r$ and $\mathbb{F}_q$-linear elementary abelian subgroups of $G(\mathbb{F}_q)$ of rank $rd$.
\end{corollary}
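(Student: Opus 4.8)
The plan is to read off the bijection from Proposition \ref{Subcat} and the fully faithfulness of the functor $\mathcal{F}$ established in Theorem \ref{IsoCat}, with the only genuinely new work being a check that $\mathcal{F}$ is injective on objects together with a bookkeeping of ranks. First I would record injectivity on objects: if $\mathcal{F}(\epsilon)=\mathcal{F}(\epsilon')$, i.e.\ $\sigma(\epsilon(\mathbb{F}_q))=\sigma(\epsilon'(\mathbb{F}_q))$, then applying the bijection $\sigma^{-1}$ gives $\epsilon(\mathbb{F}_q)=\epsilon'(\mathbb{F}_q)$, and since every $\mathbb{F}_q$-expressible subalgebra equals $\epsilon(\mathbb{F}_q)\otimes_{\mathbb{F}_q}k$ by Definition \ref{FpExpressible}, base change gives $\epsilon=\epsilon'$. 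Together with the fact from Proposition \ref{Subcat} that every $\mathbb{F}_q$-linear elementary abelian subgroup $E\subset G(\mathbb{F}_q)$ is literally of the form $\sigma(\epsilon(\mathbb{F}_q))$ for some $\mathbb{F}_q$-expressible $\epsilon$, this shows that $\epsilon\mapsto\sigma(\epsilon(\mathbb{F}_q))$ is a bijection from the set of $\mathbb{F}_q$-expressible subalgebras of $\mathfrak{g}$ onto the set of $\mathbb{F}_q$-linear elementary abelian subgroups of $G(\mathbb{F}_q)$.

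Next I would match ranks on both sides of this bijection. In the forward direction, as already observed in the proof of Theorem \ref{IsoCat}, if $\dim_k\epsilon=r$ then $\epsilon(\mathbb{F}_q)$ has $q^r=p^{rd}$ elements, so the elementary abelian $p$-group $\sigma(\epsilon(\mathbb{F}_q))$ has order $p^{rd}$, i.e.\ rank $rd$. In the reverse direction, given an $\mathbb{F}_q$-linear elementary abelian subgroup $E$ of rank $rd$, the argument of Proposition \ref{Subcat} exhibits $V=\sigma^{-1}(E)$ as an $\mathbb{F}_q$-subspace of $\mathfrak{g}(\mathbb{F}_q)$; since $\sigma$ is a bijection we have $|V|=|E|=p^{rd}=q^r$, hence $\dim_{\mathbb{F}_q}V=r$ and the corresponding subalgebra $\epsilon=V\otimes_{\mathbb{F}_q}k$ has dimension $r$ over $k$. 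Here Lemma \ref{DivRank} is what tells us the rank of an $\mathbb{F}_q$-linear subgroup is always a multiple of $d$, so expressing it as $rd$ loses no generality. Restricting the bijection of the previous paragraph to the rank-$r$ objects on the Lie algebra side (resp.\ the rank-$rd$ objects on the group side) therefore yields the claimed bijection.

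I do not anticipate a real obstacle: the corollary is essentially a repackaging of Theorem \ref{IsoCat} and Proposition \ref{Subcat} at the level of object sets rather than categories. The one point needing a moment's care is the reverse rank count—confirming that the $\mathbb{F}_q$-vector space $V=\sigma^{-1}(E)$ produced in Proposition \ref{Subcat} has $\mathbb{F}_q$-dimension exactly $r$ when $E$ has rank $rd$—but once Proposition \ref{Subcat} has identified $V$ as an $\mathbb{F}_q$-subspace, this is immediate from the cardinality identity $|V|=|E|=q^r$.
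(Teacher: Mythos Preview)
Your proposal is correct and follows essentially the same approach as the paper: both use the maps $\epsilon\mapsto\sigma(\epsilon(\mathbb{F}_q))$ and $E\mapsto\sigma^{-1}(E)\otimes_{\mathbb{F}_q}k$ from Proposition~\ref{Subcat}, with the paper simply asserting they are mutual inverses while you spell out injectivity, surjectivity, and the rank bookkeeping explicitly. The only difference is level of detail, not strategy.
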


\begin{proof}
The maps $\epsilon\mapsto\sigma(\epsilon(\mathbb{F}_q))$ and $E\mapsto\sigma^{-1}(E)\otimes_{\mathbb{F}_q}k$ used in the proof of Proposition \ref{Subcat} are inverse to each other.
\end{proof}

\begin{definition}
Let $R=R(\mathfrak{g})$ denote the largest integer such that $\mathbb{E}(R,\mathfrak{g})$ is nonempty.
\end{definition}

\begin{corollary}\label{MaximalElemAb}
Any elementary abelian subgroup $E\subset G(\mathbb{F}_q)$ is contained in an $\mathbb{F}_q$-linear elementary abelian subgroup of $G(\mathbb{F}_q)$.  In particular, any maximal elementary abelian subgroup is $\mathbb{F}_q$-linear.  Also, the largest rank of an elementary abelian subgroup of $G(\mathbb{F}_q)$ is $R(\mathrm{Lie}(G))d$.
\end{corollary}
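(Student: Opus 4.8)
The plan is to establish the three parts of the corollary in order, the containment statement carrying essentially all the content. For the containment I would, given an elementary abelian $E\subseteq G(\mathbb{F}_q)$, build an $\mathbb{F}_q$-linear overgroup out of the $\mathbb{F}_q$-span of $\sigma^{-1}(E)$; the statement about maximal subgroups is then immediate; and the rank formula follows by combining these with Lemma \ref{DivRank}, Corollary \ref{Bij}, and the separate input that $\mathbb{E}(R(\mathfrak{g}),\mathfrak{g})$ has an $\mathbb{F}_q$-rational point.

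\emph{Containment.} Since $E$ is a finite $p$-group its elements are unipotent, so $E\subseteq\mathcal{U}(G)$ and $W:=\sigma^{-1}(E)\subseteq\mathcal{N}(\mathfrak{g})$; as $\sigma$ is defined over $\mathbb{F}_p$ (property 2 of Theorem \ref{CanonSpringer}) we in fact have $W\subseteq\mathfrak{g}(\mathbb{F}_q)$. Properties 1 and 3 of Theorem \ref{CanonSpringer} show that $W$ is closed under addition and consists of pairwise-commuting nilpotent elements, and $\lambda\sigma^{-1}(g)=\sigma^{-1}(g^{\lambda})\in W$ for $\lambda\in\mathbb{F}_p$ (where $g^{\lambda}$ is the group power), so $W$ is a finite $\mathbb{F}_p$-subspace of $\mathfrak{g}(\mathbb{F}_q)$. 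Let $V\subseteq\mathfrak{g}(\mathbb{F}_q)$ be its $\mathbb{F}_q$-span. I claim $\epsilon:=V\otimes_{\mathbb{F}_q}k$ is an $\mathbb{F}_q$-expressible elementary subalgebra: it is abelian because the bracket is bilinear and vanishes on $W$; every element of $V$ is a $k$-combination of finitely many pairwise-commuting nilpotent elements and is therefore nilpotent; and $p\ge h$ forces trivial restriction (as with the subalgebra $V\otimes_{\mathbb{F}_q}k$ in the proof of Proposition \ref{Subcat}). By Proposition \ref{Subcat}, $\mathcal{F}(\epsilon)=\sigma(V)$ is an $\mathbb{F}_q$-linear elementary abelian subgroup of $G(\mathbb{F}_q)$, and $W\subseteq V$ gives $E=\sigma(W)\subseteq\sigma(V)=\mathcal{F}(\epsilon)$. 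This proves the first statement, and hence the second: a maximal $E$ must equal the $\mathbb{F}_q$-linear subgroup $\mathcal{F}(\epsilon)$ containing it.

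\emph{Largest rank.} Write $A(q)$ for the largest rank of an elementary abelian subgroup of $G(\mathbb{F}_q)$. A subgroup $E$ realizing $A(q)$ is maximal, hence $\mathbb{F}_q$-linear by the previous step, hence of rank divisible by $d$ by Lemma \ref{DivRank}; Corollary \ref{Bij} matches $E$ with an $\mathbb{F}_q$-expressible --- in particular elementary --- subalgebra of rank $A(q)/d$, so $A(q)/d\le R(\mathfrak{g})$, i.e. $A(q)\le R(\mathfrak{g})d$. For the reverse inequality it suffices, by Corollary \ref{Bij}, to exhibit an $\mathbb{F}_q$-rational point of $\mathbb{E}(R(\mathfrak{g}),\mathfrak{g})$. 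Here I would invoke structure theory: $\mathbb{E}(R(\mathfrak{g}),\mathfrak{g})$ is nonempty (by definition of $R(\mathfrak{g})$) and projective, and the adjoint action of a maximal torus $T\subseteq G$ stabilizes it, so by the Borel fixed-point theorem applied to $T$ there is a $T$-fixed point $\epsilon^{*}$. A $T$-stable subspace of $\mathfrak{g}$ contained in $\mathcal{N}(\mathfrak{g})$ meets $\mathrm{Lie}(T)$ trivially, so $\epsilon^{*}=\bigoplus_{\alpha\in S}ke_{\alpha}$ for a set $S$ of roots; choosing a Chevalley basis shows $\epsilon^{*}$ is defined over $\mathbb{F}_p$, hence over $\mathbb{F}_q$. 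Then $\epsilon^{*}$ is $\mathbb{F}_q$-expressible of rank $R(\mathfrak{g})$, and Corollary \ref{Bij} produces an elementary abelian subgroup of $G(\mathbb{F}_q)$ of rank $R(\mathfrak{g})d$, giving $A(q)=R(\mathfrak{g})d$.

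I expect the last step --- producing an $\mathbb{F}_q$-rational point of $\mathbb{E}(R(\mathfrak{g}),\mathfrak{g})$ --- to be the only real obstacle, everything else being formal bookkeeping with $\sigma$ and the dictionary of \S\ref{AnIsoOfCat}. The torus fixed-point argument handles the split case cleanly; if $G$ is merely defined (not split) over $\mathbb{F}_p$, the fixed point $\epsilon^{*}$ need not be Frobenius-stable and one instead has to argue that the finite scheme $\mathbb{E}(R(\mathfrak{g}),\mathfrak{g})^{T}$, or $\mathbb{E}(R(\mathfrak{g}),\mathfrak{g})$ itself, acquires a point over each $\mathbb{F}_q$ --- e.g. by a Lang-type argument or via an explicit maximal elementary subalgebra attached to a rational Borel. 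A minor point to keep explicit, used both here and in Proposition \ref{Subcat}, is that $p\ge h$ makes the nilpotent cone equal the restricted nullcone, so that ``abelian subspace of commuting nilpotent elements'' already means ``elementary subalgebra''.
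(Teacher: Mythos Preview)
Your containment argument is exactly the paper's: take the $\mathbb{F}_q$-span $V=\langle\sigma^{-1}(E)\rangle$ inside $\mathfrak{g}(\mathbb{F}_q)$ and apply Proposition~\ref{Subcat} to $V\otimes_{\mathbb{F}_q}k$, obtaining the $\mathbb{F}_q$-linear overgroup $\sigma(V)\supseteq E$. The maximal-subgroup statement and the upper bound $A(q)\le R(\mathfrak{g})d$ then follow formally, again as in the paper.

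Where you diverge is the lower bound $A(q)\ge R(\mathfrak{g})d$. The paper simply writes ``let $\epsilon$ be an elementary subalgebra of rank $R$'' and applies Corollary~\ref{Bij} to get $\sigma(\epsilon(\mathbb{F}_q))$ of rank $Rd$; but Corollary~\ref{Bij} needs $\epsilon$ to be $\mathbb{F}_q$-expressible, which the paper does not justify. You spotted this and supplied an argument: act by a maximal torus $T$ on the projective variety $\mathbb{E}(R,\mathfrak{g})$, invoke Borel's fixed-point theorem, and observe that a $T$-fixed elementary subalgebra is a sum of root spaces, hence defined over $\mathbb{F}_p$ via a Chevalley basis. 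This is correct in the split case and genuinely strengthens the paper's proof; your caveat about non-split forms is appropriate (one could alternatively use Lang's theorem on a $G$-orbit in $\mathbb{E}(R,\mathfrak{g})$ as in \S\ref{G-Orb}, which works without assuming $T$ is split). So your route is the paper's route with a gap filled, at the cost of importing a little more structure theory.
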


\begin{proof}
To any $E\subset G(\mathbb{F}_q)$, consider $V=\langle \sigma^{-1}(E)\rangle,$ the $\mathbb{F}_q$-subspace of $\mathfrak{g}(\mathbb{F}_q)$ generated by $\sigma^{-1}(E)$.  Then $\sigma(V)=\mathcal{F}(V\otimes_{\mathbb{F}_q}k)$ is $\mathbb{F}_q$-linear, and $E\subset\sigma(V)$.

For the last statement of the corollary, let $\epsilon$ be an elementary subalgebra of rank $R=R(\mathrm{Lie}(G))$.  Then Corollary \ref{Bij} shows that $\sigma(\epsilon(\mathbb{F}_q))$ is elementary abelian of rank $Rd$.  If there exists $E\subset G(\mathbb{F}_q)$ of larger rank, then $E$ must lie in an $\mathbb{F}_q$-linear elementary abelian subgroup $E'$ of rank $R'd$ for $R'>R$.  Then $\sigma^{-1}(E')\otimes_{\mathbb{F}_q}k$ is an elementary subalgebra of rank $R'$, contradicting the maximality of $R$.
\end{proof}

\begin{remark}
In the proof of Corollary \ref{MaximalElemAb}, one could also construct the group $\{g^{\lambda}\mid g\in E,\;\lambda\in\mathbb{F}_q\}$ as an $\mathbb{F}_q$-linear elementary abelian subgroup containing $E$.  In fact, we have the equality $\sigma(\langle\sigma^{-1}(E)\rangle)=\{g^{\lambda}\mid g\in E,\;\lambda\in\mathbb{F}_q\}$.  This motivates the notation $\langle E\rangle_{\mathbb{F}_q}$ for the group $\{g^{\lambda}\mid g\in E,\;\lambda\in\mathbb{F}_q\}$, which is the smallest $\mathbb{F}_q$-linear subgroup containing $E$.
\end{remark}

Corollary \ref{MaximalElemAb} allows us to relate the maximal rank of an elementary abelian $p$-subgroup in $G(\mathbb{F}_q)$, known as the $p$-rank of $G(\mathbb{F}_q)$, with $R(\mathrm{Lie}(G))$.  The $p$-ranks of the finite simple groups of Lie type are known (cf. Table 3.3.1 in \cite{GLS}).  This leads to Table $\ref{Largest}$, which presents $R(\mathfrak{g})$ for the simple Lie algebras.

{\renewcommand{\arraystretch}{2}

\begin{table}[ht]
\caption{Dimension of largest elementary subalgebra of simple $\mathfrak{g}$}\label{Largest}
\begin{minipage}[b]{0.5\linewidth}
\hfill
\begin{tabular}{|c|c|}
\hline
Type&$R(\mathfrak{g})$\\
\hline
$A_n$&$\left\lfloor\left(\frac{n+1}{2}\right)^2\right\rfloor$\\
\hline
$B_n$, $n=2,3$&$(2n-1)$\\
\hline
$B_n$, $n\ge 4$&$1+\binom{n}{2}$\\
\hline
$C_n$, $n\ge 3$&$\binom{n+1}{2}$\\
\hline
$D_n$, $n\ge 4$&$\binom{n}{2}$\\
\hline
\end{tabular}\hspace*{.1in}\\
\end{minipage}
\begin{minipage}[b]{0.5\linewidth}
\hspace{.05in}
\begin{tabular}{|c|c|}
\hline
$\quad\;\;$Type$\quad\;\;$&$\;\;\;\;R(\mathfrak{g})\;\;\;\;$\\
\hline
$E_6$&$16$\\
\hline
$E_7$&$27$\\
\hline
$E_8$&$36$\\
\hline
$F_4$&$9$\\
\hline
$G_2$&$3$\\
\hline
\end{tabular}\\
\end{minipage}
\end{table}
}

\begin{example}
Let $G=\mathrm{SO}_3$, $p\ge 3$, and $r=1$.  Then $\mathrm{Lie}(SO_3)=\mathfrak{so}_3$ is the collection of skew-symmetric $3\times 3$ matrices.  A skew-symmetric $3\times 3$ nilpotent matrix has the form:
\[
\begin{pmatrix}0&x&y\\-x&0&z\\-y&-z&0\end{pmatrix}
\]
where $x^2+y^2+z^2=0$.  It follows that $\mathbb{E}(1,\mathfrak{so}_3)$ is the irreducible projective variety in $\mathbb{P}^2$ of all points $[x:y:z]$ satisfying $x^2+y^2+z^2=0$.  This equation has $q^2$ solutions over $\mathbb{F}_q$ (exercise!), one of which is $(0,0,0)$.  This leaves us with a set $S$ of $q^2-1$ non-trivial solutions, each of which spans a one dimensional $\mathbb{F}_q$-expressible subalgebra.  Each $\mathbb{F}_q$-expressible subalgebra contains exactly $q-1$ elements of $S$, so that there are $q+1$ different $\mathbb{F}_q$-expressible subalgebras.  A quick computation in Magma shows it is also true that there are $q+1$ subgroups of the form $(\mathbb{Z}/p\mathbb{Z})^d$ in $SO_3(\mathbb{F}_q)$, for many small values of $q$ ($q<400$) as we expect from Corollary \ref{Bij}.  Notice here that $d$ is the maximal rank of an elementary abelian subgroup in $\mathrm{SO}_3(\mathbb{F}_q)$, so that all such subgroups are $\mathbb{F}_q$-linear by Corollary \ref{MaximalElemAb}.
\end{example}

\begin{remark}
In this section we have chosen to use Quillen's category of elementary abelian subgroups as a motivation for defining our category of elementary subalgebras.  Our reasoning behind this is due to the importance of Quillen's category in the cohomology of the group $G(\mathbb{F}_p)$.  It is the author's hope that the isomorphic category $\mathcal{C}_{\mathfrak{g}}(\mathbb{F}_p)$,  or the larger category $\mathcal{C}_{\mathfrak{g}}$ might hold a similar importance in the cohomology of $\mathfrak{g}$.

Another approach for this section would be to motivate our definitions by the Quillen complex of elementary abelian subgroups, that is, the complex associated to the poset of elementary abelian subgroups of $G$ ordered by inclusion.  With a similar definition of the complex of elementary subalgebras, we find that Quillen's complex for $G(\mathbb{F}_p)$ is isomorphic to the subcomplex of $\mathbb{F}_p$-expressible subalgebras.  Using the appropriate analogues of group-theoretic notions (for example, the Frattini subalgebra of $\mathfrak{g}$ as studied in \cite{LT}), much of the machinery developed in Part $2$ of \cite{Smith} for groups can be developed for Lie algebras.  In particular, it is true that the subcomplex of $p$-subalgebras is $G$-homotopy equivalent to the subcomplex of elementary subalgebras, and the proof follows that of the analogous statement for groups (see (\cite{Smith}, Theorem 4.2.4) for a proof using Quillen's Fiber Theorem).  These connections further emphasize that in the study of restricted Lie algebra cohomology, elementary subalgebras of restricted Lie algebras are the appropriate analogue to elementary abelian subgroups of finite groups.
\end{remark}

\section{$G$-orbits of $\mathbb{E}(r,\mathfrak{g})$ defined over $\mathbb{F}_q$}\label{G-Orb}

In this section we use Lang's theorem to show that $\mathbb{F}_q$-rational points exist in the $G$-orbits of $\mathbb{E}(r,\mathfrak{g})$ that are defined over $\mathbb{F}_q$.  By the previous sections, these points correspond exactly to $\mathbb{F}_q$-linear elementary abelian $p$-groups of rank $rd$ in $G(\mathbb{F}_q)$.  This leads to Theorem \ref{FinOrb}, which gives a bijection between the $G$-orbits of $\mathbb{E}(r,\mathfrak{g})$ defined over $\mathbb{F}_q$ and $G$-conjugacy classes of $\mathbb{F}_q$-linear elementary abelian $p$-subgroups of rank $rd$ in $G(\mathbb{F}_q)$.  We clarify the phrase "$G$-conjugacy classes of $\mathbb{F}_q$-linear elementary abelian $p$-groups of rank $rd$ in $G(\mathbb{F}_q)$."  Two elementary abelian $p$-subgroups of rank $r$ in $G(\mathbb{F}_q)$, say $E_r$ and $E'_r$, are $G$-conjugate if there is $g\in G(k)$ such that $g$ conjugates $E_r$ to $E'_r$.  Notice this is not the same as the standard notion of conjugate subgroups in $G(\mathbb{F}_q)$, as there may be non-conjugate subgroups $H,K\subset G(\mathbb{F}_q)$ which are conjugate when viewed as subgroups in $G(\mathbb{F}_{q^e})$.  Also notice that by equation \eqref{GenExp}, the conjugate of an $\mathbb{F}_q$-linear subgroup is $\mathbb{F}_q$-linear, so that the notion of a conjugacy class of $\mathbb{F}_q$-linear subgroups is well-defined.

E. Friedlander has asked for sufficient conditions such that $\mathbb{E}(r,\mathfrak{g})$ is a finite union of $G$-orbits.  Theorem \ref{FinOrb} shows that if $G$ is connected and reductive, and if $p$ satisfies condition $(\star)$ then $\mathbb{E}(r,\mathfrak{g})$ has finitely many $G$-orbits defined over $\mathbb{F}_q$.  This of course does not resolve the question, but as Example \ref{InfOrb} shows, any list of sufficient conditions is sure to be fairly restrictive.  E. Friedlander has conjectured that if $R=R(\mathfrak{g})$ is the largest integer such that $\mathbb{E}(R,\mathfrak{g})$ is non-empty, then $\mathbb{E}(R,\mathfrak{g})$ is a finite union of $G$-orbits.  Proposition \ref{FriedConj} reduces this conjecture to showing that the number of conjugacy classes of elementary abelian subgroups of rank $Rd$ is bounded as $d$ grows.  This is known to be true for all simple groups of classical type, so we obtain an affirmative answer to the conjecture for a large class of groups, namely those $G$ whose derived subgroup is an almost-direct product of simple groups of classical type.

Theorem \ref{FinOrb} also provides a method for bounding the number of $G$-orbits defined over $\mathbb{F}_q$.  In section \ref{Group}, we use Magma to bound the number of $\mathrm{GL}_n$-orbits defined over $\mathbb{F}_p$ in $\mathbb{E}(r,\mathfrak{gl}_n)$ for $n\le 5$.

As mentioned in the introductory paragraph to this section, Theorem \ref{FinOrb} follows from the previous sections and the following theorem of Lang.

\begin{theorem}[\cite{Lang}, Theorem 2]\label{Lang}
Let $G$ be an algebraic group defined over a finite field $F$, and let $V$ be a variety defined over $F$ on which $G$ acts morphically and transitively.  Then $V$ has an $F$-rational point.
\end{theorem}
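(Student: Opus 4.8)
The statement is classical; the plan is to derive it from the surjectivity of the Lang isogeny on a connected group together with a Galois-descent argument. Write $k=\overline{\mathbb{F}_q}$, let $\phi$ denote the $q$-power Frobenius, acting on $k$ and hence functorially on every $F$-variety; I use that $\phi(g\cdot v)=\phi(g)\cdot\phi(v)$ for $g\in G(k)$, $v\in V(k)$, and that a $k$-point of an $F$-variety is $F$-rational exactly when it is $\phi$-fixed. First I would reduce to $G$ connected: the orbits of $G^{\circ}$ on $V$ are permuted transitively by the finite group $\pi_0(G)$ and each is open and closed in $V$, so a connected $V$ carries a single $G^{\circ}$-orbit and one may replace $G$ by $G^{\circ}$ (in the application of this paper $G$ is already connected).

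The heart of the argument is that the Lang map $L\colon G\to G$, $L(x)=x^{-1}\phi(x)$, is surjective on $k$-points. The one input that uses the finiteness of the base field is that $\phi$ is purely inseparable, so $d\phi_e=0$; hence, up to composition with translations, the differential at $e$ of each twisted map $L_a\colon x\mapsto x^{-1}a\,\phi(x)$ equals $-\operatorname{id}_{\mathfrak g}$, an isomorphism. Thus every $L_a$ is dominant, and by Chevalley's theorem its image contains a dense open subset of $G$. Since $G$ is connected, hence irreducible, the dense opens $\operatorname{Im}(L)$ and $\operatorname{Im}(L_a)$ meet; a common value $w=g^{-1}\phi(g)=h^{-1}a\,\phi(h)$ rearranges to $a=(gh^{-1})^{-1}\phi(gh^{-1})=L(gh^{-1})$. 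As $a$ was arbitrary, $L$ is onto.

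To finish, I would pick any $v_0\in V(k)$ (a nonempty $F$-variety has a $k$-point), observe $\phi(v_0)\in V(k)$, and use transitivity to find $g\in G(k)$ with $g^{-1}\cdot v_0=\phi(v_0)$. Writing $g=a^{-1}\phi(a)$ via surjectivity of $L$ and setting $w=a\cdot v_0$, one computes
\[
\phi(w)=\phi(a)\cdot\phi(v_0)=\phi(a)\cdot\bigl(g^{-1}\cdot v_0\bigr)=\phi(a)\,\phi(a)^{-1}a\cdot v_0=w,
\]
using $g^{-1}=\phi(a)^{-1}a$. Hence $w$ is $\phi$-fixed, i.e.\ $w\in V(F)$, which is the desired rational point.

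The main obstacle is the surjectivity of the Lang map, and within it the claim that $L$ and all of its translates $L_a$ are dominant — this is precisely where the ground field being finite is used (through $d\phi=0$) and where connectedness of $G$ is essential (so that two dense opens intersect). Everything after that is formal; the only other care needed is the reduction to connected $G$, which relies on the standard fact that orbits of algebraic group actions are open in their closures.
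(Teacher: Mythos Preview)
The paper does not supply a proof of this statement; it is quoted from Lang and used as a black box (see the sentence immediately following the theorem and the proof of Lemma~\ref{RatPts}). Your argument is the classical one: surjectivity of the Lang isogeny on a connected group, followed by the Galois-descent trick $w=a\cdot v_0$. For connected $G$, which is the only case the paper ever invokes, your proof is correct and complete.

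One caveat worth recording: your reduction to the connected case is not sufficient as written, and in fact cannot be made to work in general, because the statement is \emph{false} for disconnected $G$. A simple counterexample: let $G=\mathbb{Z}/2\mathbb{Z}$ be the constant group over $\mathbb{F}_p$ acting on $V=\operatorname{Spec}\mathbb{F}_{p^2}$ through the nontrivial Galois automorphism; then $G$ acts transitively on $V(\overline{\mathbb{F}_p})$ but $V(\mathbb{F}_p)=\varnothing$. Your sentence ``a connected $V$ carries a single $G^{\circ}$-orbit'' is fine, but $V$ need not be connected, and when it is not there may be no Frobenius-stable $G^{\circ}$-orbit to pass to. In Lang's original paper the group is taken to be connected; the paper's statement omits this hypothesis, but since the application is to a connected reductive $G$, nothing in the paper is affected. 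You were right to flag that the reduction is unnecessary here.
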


We should clarify that in Theorem \ref{Lang}, an action is transitive if there is a $v\in V$ such that $V=G\cdot v$.  We do not require the map $G/G_{v}\to V$ to be an isomorphism of varieties.  The proof of the following lemma is immediate from Theorem \ref{Lang}.

\begin{lemma}\label{RatPts}
Let $r$ be such that $\mathbb{E}(r,\mathfrak{g})$ is nonempty, and let $\mathcal{O}$ be any $G$-orbit of $\mathbb{E}(r,\mathfrak{g})$.  If $\mathcal{O}$ is defined over $\mathbb{F}_q$, then the set of $\mathbb{F}_q$-rational points of $\mathcal{O}$ is non-empty.
\end{lemma}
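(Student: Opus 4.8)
The plan is to deduce this directly from Lang's theorem (Theorem \ref{Lang}) by checking that its hypotheses are met for the $G$-action on a single orbit $\mathcal{O}$. First I would observe that $\mathbb{E}(r,\mathfrak{g})$ carries an $\mathbb{F}_p$-structure: as noted in \S\ref{Rationality}, fixing an embedding $\mathfrak{g}_0\hookrightarrow\mathfrak{gl}_n(\mathbb{F}_p)$ shows that $\mathbb{E}(r,\mathfrak{g})$ is cut out inside $\mathrm{Grass}(r,\mathfrak{g})$ by homogeneous polynomials with $\mathbb{F}_p$-coefficients (the nilpotence, commuting, and linear-independence conditions), and in particular $\mathbb{E}(r,\mathfrak{g})$ is defined over $\mathbb{F}_q$. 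The group $G$ is defined over $\mathbb{F}_p$ by our standing assumption $G=G_0\times_{\mathbb{F}_p}\mathrm{Spec}\,k$, hence over $\mathbb{F}_q$, and it acts on $\mathbb{E}(r,\mathfrak{g})$ via the adjoint action, which is a morphism defined over $\mathbb{F}_p$.

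Next I would use the hypothesis that the orbit $\mathcal{O}$ is itself defined over $\mathbb{F}_q$. Then $\mathcal{O}$ is a locally closed subvariety of $\mathbb{E}(r,\mathfrak{g})$ defined over $\mathbb{F}_q$, the group $G$ acts on $\mathcal{O}$ morphically (the restriction of the adjoint action), and this action is transitive in the sense clarified after Theorem \ref{Lang} — indeed $\mathcal{O}=G\cdot\epsilon$ for any $\epsilon\in\mathcal{O}$, which is exactly what it means to be a single $G$-orbit, and we do not need $G/G_\epsilon\to\mathcal{O}$ to be an isomorphism of varieties. All hypotheses of Theorem \ref{Lang} are thus satisfied with $F=\mathbb{F}_q$ and $V=\mathcal{O}$, so $\mathcal{O}$ has an $\mathbb{F}_q$-rational point, i.e. the set of $\mathbb{F}_q$-rational points of $\mathcal{O}$ is nonempty.

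There is essentially no obstacle here; the content is entirely in Lang's theorem, and the only thing to be careful about is the meaning of ``defined over $\mathbb{F}_q$'' for a non-closed orbit and the permissive notion of transitivity in Theorem \ref{Lang}, both of which have already been addressed in the surrounding text. So the proof is, as the paper states, immediate.
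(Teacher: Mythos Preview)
Your proposal is correct and follows exactly the approach indicated by the paper, which states that the lemma is immediate from Theorem \ref{Lang}; you have simply spelled out the verification of hypotheses that the paper leaves implicit.
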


\begin{theorem}\label{FinOrb}
Let $G$ be connected and reductive, and let $p$ satisfy condition $(\star)$.  The $G$-orbits of $\mathbb{E}(r,\mathfrak{g})$ defined over $\mathbb{F}_q$ are in bijection with the $G$-conjugacy classes of $\mathbb{F}_q$-linear elementary abelian $p$-groups of rank $rd$ in $G(\mathbb{F}_q)$.  In particular, $\mathbb{E}(r,\mathfrak{g})$ contains finitely many $G$-orbits defined over $\mathbb{F}_q$.  Furthermore, the number of $\mathbb{F}_q$-rational points of a $G$-orbit defined over $\mathbb{F}_q$ is equal to the size of its corresponding $G$-conjugacy class.
\end{theorem}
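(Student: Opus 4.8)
The plan is to transport the finite-object bijection of Corollary \ref{Bij} across the Springer isomorphism, deduce the orbit/conjugacy-class correspondence from the $G$-equivariance of $\sigma$, and use Lemma \ref{RatPts} to guarantee rational points in the orbits defined over $\mathbb{F}_q$.

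First I would record the basic dictionary. By \S\ref{Rationality} the $\mathbb{F}_q$-rational points of $\mathbb{E}(r,\mathfrak{g})$ are exactly the $\mathbb{F}_q$-expressible subalgebras, and $\epsilon\mapsto\sigma(\epsilon(\mathbb{F}_q))$ identifies these bijectively with the $\mathbb{F}_q$-linear elementary abelian subgroups of rank $rd$ in $G(\mathbb{F}_q)$. By Lemma \ref{RatPts} a $G$-orbit $\mathcal{O}\subseteq\mathbb{E}(r,\mathfrak{g})$ that is defined over $\mathbb{F}_q$ contains an $\mathbb{F}_q$-rational point; conversely, if $\epsilon\in\mathcal{O}$ is $\mathbb{F}_q$-rational then $\mathrm{Frob}_q$ fixes $\epsilon$, hence stabilizes $G\cdot\epsilon$, so $\mathcal{O}$ is defined over $\mathbb{F}_q$. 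Thus the orbits defined over $\mathbb{F}_q$ are precisely the $G$-orbits meeting the set of $\mathbb{F}_q$-expressible subalgebras.

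Next I would show that the bijection $\epsilon\mapsto\sigma(\epsilon(\mathbb{F}_q))$ carries ``lie in a common $G$-orbit'' to ``$G$-conjugate.'' One inclusion is formal: since $\sigma$ is $G$-equivariant over $\mathbb{F}_p$, for $g\in G=G(k)$ we have $g\,\sigma(\epsilon(\mathbb{F}_q))\,g^{-1}=\sigma(\mathrm{Ad}_g(\epsilon(\mathbb{F}_q)))$, so applying $\sigma^{-1}$ to a $G$-conjugate of $\sigma(\epsilon(\mathbb{F}_q))$ yields $\mathrm{Ad}_g(\epsilon(\mathbb{F}_q))$, and extending scalars to $k$ shows the two subalgebras lie in one $G$-orbit. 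Granting the converse, the rule $\mathcal{O}\mapsto[\sigma(\epsilon(\mathbb{F}_q))]$ (for any $\mathbb{F}_q$-rational $\epsilon\in\mathcal{O}$) is a well-defined bijection onto the $G$-conjugacy classes of $\mathbb{F}_q$-linear elementary abelian subgroups of rank $rd$: surjectivity holds because $\sigma^{-1}(E)\otimes_{\mathbb{F}_q}k$ is $\mathbb{F}_q$-expressible and maps to $E$ by Corollary \ref{Bij}, and injectivity is the formal inclusion just noted. For the point count I would observe that $\epsilon'\mapsto\sigma(\epsilon'(\mathbb{F}_q))$ restricts to a bijection between $\mathcal{O}(\mathbb{F}_q)$ and the set of $G$-conjugates of $E=\sigma(\epsilon(\mathbb{F}_q))$ lying in $G(\mathbb{F}_q)$, which by \eqref{GenExp} is the entire class $[E]$ (every $G$-conjugate of an $\mathbb{F}_q$-linear subgroup is again $\mathbb{F}_q$-linear of rank $rd$); hence $|\mathcal{O}(\mathbb{F}_q)|=|[E]|$. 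Finiteness of the number of orbits defined over $\mathbb{F}_q$ is then immediate, since the finite group $G(\mathbb{F}_q)$ has only finitely many elementary abelian subgroups, and a fortiori finitely many $G$-conjugacy classes of them.

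The hard part will be the converse implication: given $\epsilon'=\mathrm{Ad}_{g_0}\epsilon$ with $g_0\in G(k)$ and both $\epsilon,\epsilon'$ $\mathbb{F}_q$-rational, I need $h\in G(k)$ with $\mathrm{Ad}_h(\epsilon(\mathbb{F}_q))=\epsilon'(\mathbb{F}_q)$, equivalently $h\,\sigma(\epsilon(\mathbb{F}_q))\,h^{-1}=\sigma(\epsilon'(\mathbb{F}_q))$ — the subtlety being that the correspondence must be compatible with the $G(k)$-action, not merely with the $G(\mathbb{F}_q)$-action used in Theorem \ref{IsoCat}. The obvious choice $h=g_0$ need not work: setting $a=g_0^{-1}\mathrm{Frob}_q(g_0)$, the $\mathbb{F}_q$-rationality of $\epsilon'$ forces $a\in\mathrm{Stab}_G(\epsilon)$, and $\mathrm{Ad}_{g_0}$ carries the form $\epsilon(\mathbb{F}_q)$ onto $\epsilon'(\mathbb{F}_q)$ precisely when $\mathrm{Ad}_a$ preserves $\epsilon(\mathbb{F}_q)$, which can fail when $a$ represents a nontrivial class in the component group of $\mathrm{Stab}_G(\epsilon)$. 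The remedy is to correct $g_0$ to $g_0s$ for a suitable $s\in\mathrm{Stab}_G(\epsilon)$, and to produce such an $s$ I would apply Lang's theorem (Theorem \ref{Lang}, in its connected form) to the identity component of $\mathrm{Stab}_G(\epsilon)$ — equivalently to the transporter $\{g\in G:\mathrm{Ad}_g\epsilon=\epsilon'\}$, which is defined over $\mathbb{F}_q$ and homogeneous under $\mathrm{Stab}_G(\epsilon)$. Dealing cleanly with the disconnectedness of $\mathrm{Stab}_G(\epsilon)$ is the technical heart of the proof; once it is settled, every assertion of the theorem follows from the bookkeeping above.
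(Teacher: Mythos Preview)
Your plan matches the paper's: invoke Lemma~\ref{RatPts} for rational points, Corollary~\ref{Bij} for the dictionary with $\mathbb{F}_q$-linear subgroups, and $G$-equivariance of $\sigma$ to pass between $G$-orbits and $G$-conjugacy classes. The paper's proof, however, is far terser at the step you flag as hard: it writes only ``By $G$-equivariance, these elementary abelian $p$-subgroups form a $G$-conjugacy class,'' with no transporter argument, no correction $g_0\mapsto g_0s$, and no discussion of the component group of $\mathrm{Stab}_G(\epsilon)$. So the second Lang-type argument you sketch is analysis the paper simply does not supply; the disconnectedness issue you isolate is not addressed there either, and you are being more scrupulous than the source at this point.

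One small correction to your analysis of the obstruction: the condition for $h=g_0$ already to carry $\epsilon(\mathbb{F}_q)$ onto $\epsilon'(\mathbb{F}_q)$ is that $a=g_0^{-1}F(g_0)$ \emph{centralize} $\epsilon$ (i.e.\ $\mathrm{Ad}_a$ fix $\epsilon(\mathbb{F}_q)$ pointwise), not merely that $\mathrm{Ad}_a$ preserve $\epsilon(\mathbb{F}_q)$ as a set. This shifts the natural target of your Lang argument from $\mathrm{Stab}_G(\epsilon)$ itself to the quotient $\mathrm{Stab}_G(\epsilon)/C_G(\epsilon)\hookrightarrow\mathrm{GL}(\epsilon)$, which is where the connectedness question really lives.
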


\begin{proof}
Let $\mathcal{O}$ be any $G$-orbit of $\mathbb{E}(r,\mathfrak{g})$ defined over $\mathbb{F}_q$.  By Lemma \ref{RatPts}, $\mathcal{O}(\mathbb{F}_q)$ is non-empty.  Furthermore, by Corollary \ref{Bij}, $\mathcal{O}(\mathbb{F}_q)$ is in bijective correspondence with a collection of $\mathbb{F}_q$-linear elementary abelian $p$-subgroups of rank $rd$ in $G(\mathbb{F}_q)$.  By $G$-equivariance, these elementary abelian $p$-subgroups form a $G$-conjugacy class.  Conversely, starting with a $G$-conjugacy class of $\mathbb{F}_q$-linear elementary abelian $p$-subgroups of rank $rd$, Corollary \ref{Bij} gives us a $G$-orbit of $\mathbb{E}(r,\mathfrak{g})$ whose $\mathbb{F}_q$-rational points correspond to elements of the given $G$-conjugacy class.
\end{proof}

\begin{proposition}\label{FriedConj}
Fix $r$, and let $N_d$ be the number of conjugacy classes of elementary abelian $p$-groups of rank $rd$ in $G(\mathbb{F}_q)$.  If $N_d$ is bounded, then $\mathbb{E}(r,\mathrm{Lie}(G))$ is a finite union of $G$-orbits.  In particular, $N_d$ is bounded for simple groups of classical type when $r=R$, so $\mathbb{E}(R(\operatorname{Lie}(G)),\operatorname{Lie}(G))$ is a finite union of $G$-orbits whenever $(G,G)$ is an almost-direct product of simple groups of classical type.
\end{proposition}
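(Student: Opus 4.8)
The plan is to deduce the first assertion from Theorem~\ref{FinOrb} by a descent-and-cofinality argument, and then to obtain the ``in particular'' clause from Corollary~\ref{MaximalElemAb} together with the known classification of large elementary abelian $p$-subgroups of the finite classical groups and a reduction to the simple factors of $(G,G)$.

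For the first assertion I would first note that every $G$-orbit on $\mathbb{E}(r,\mathfrak{g})$ is defined over some finite field. Indeed, $G$, the variety $\mathbb{E}(r,\mathfrak{g})$, and the adjoint action are all defined over $\mathbb{F}_p$, and $k=\overline{\mathbb{F}}_p=\bigcup_{d\ge 1}\mathbb{F}_{p^d}$, so any point $x\in\mathbb{E}(r,\mathfrak{g})$ lies in $\mathbb{E}(r,\mathfrak{g})(\mathbb{F}_{p^d})$ for some $d$; then the orbit morphism $g\mapsto\operatorname{Ad}_g(x)$ is defined over $\mathbb{F}_{p^d}$, so its image $G\cdot x$ is a $\operatorname{Gal}(\overline{\mathbb{F}}_{p^d}/\mathbb{F}_{p^d})$-stable locally closed subset and hence a subvariety defined over $\mathbb{F}_{p^d}$. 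By Theorem~\ref{FinOrb}, the $G$-orbits defined over $\mathbb{F}_{p^d}$ are in bijection with the $G$-conjugacy classes of $\mathbb{F}_{p^d}$-linear elementary abelian $p$-subgroups of rank $rd$ in $G(\mathbb{F}_{p^d})$, and this number is at most the number of $G$-conjugacy classes of all rank-$rd$ elementary abelian $p$-subgroups, which is at most $N_d$. Since an orbit defined over $\mathbb{F}_{p^d}$ is defined over $\mathbb{F}_{p^{d'}}$ whenever $d\mid d'$, the sets $S_n$ of $G$-orbits defined over $\mathbb{F}_{p^{n!}}$ form an increasing chain $S_1\subseteq S_2\subseteq\cdots$ whose union is the set of all $G$-orbits; if $N_d\le N$ for all $d$ then $|S_n|\le N$ for every $n$, and an increasing union of finite sets of size at most $N$ has at most $N$ elements. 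Hence $\mathbb{E}(r,\operatorname{Lie}(G))$ is a finite union of $G$-orbits.

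For the ``in particular'' clause I would first take $G$ simple of classical type and $r=R=R(\operatorname{Lie}(G))$. By Corollary~\ref{MaximalElemAb}, $Rd$ is the $p$-rank of $G(\mathbb{F}_{p^d})$, so $N_d$ now counts conjugacy classes of elementary abelian $p$-subgroups of maximal rank in $G(\mathbb{F}_{p^d})$; the classification of these subgroups for the finite classical groups shows this count is bounded independently of $d$ (for $\operatorname{GL}_n$, say, there is essentially a single class, realized by a fixed rectangular block of $\mathbb{F}_{p^d}$-entries inside a parabolic unipotent radical). The first assertion then gives that $\mathbb{E}(R,\operatorname{Lie}(G))$ is a finite union of $G$-orbits. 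For general reductive $G$ with $(G,G)=G'$ an almost-direct product of simple classical groups $G_1,\dots,G_m$, I would argue as in the proof of Theorem~\ref{CanonSpringer}: $G$-orbits on $\mathbb{E}(r,\mathfrak{g})$ agree with $G'$-orbits, condition $(\star)$ makes the isogeny $G_1\times\cdots\times G_m\to G'$ separable and hence induces $\operatorname{Lie}(G')\cong\bigoplus_i\operatorname{Lie}(G_i)$ together with an identification of $G'$-orbits with $(G_1\times\cdots\times G_m)$-orbits, and $\mathcal{N}(\mathfrak{g})\subseteq\operatorname{Lie}(G')$ yields $\mathbb{E}(r,\mathfrak{g})=\mathbb{E}(r,\operatorname{Lie}(G'))$ for all $r$. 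Projecting a rank-$R$ elementary subalgebra of $\operatorname{Lie}(G')$ onto the simple factors shows it is the direct sum of its projections, each of maximal rank, so $R=\sum_i R(\operatorname{Lie}(G_i))$ and $\mathbb{E}(R,\operatorname{Lie}(G'))\cong\prod_i\mathbb{E}(R(\operatorname{Lie}(G_i)),\operatorname{Lie}(G_i))$ with $G_i$ acting on the $i$-th factor only. A finite product of finite unions of orbits is a finite union of orbits, which completes the argument.

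I expect the main obstacle to be the input from finite group theory: that the number of conjugacy classes of maximal-rank elementary abelian $p$-subgroups of a finite classical group does not grow with $d$. This is exactly where the ``finitely many orbits'' phenomenon is genuinely imported from the group side; everything else is formal. A secondary point requiring care is the reduction to simple factors, where one must carry $\mathbb{F}_{p^d}$-rational points through the separable isogeny and check that a maximal-rank elementary abelian subgroup of $G(\mathbb{F}_{p^d})$ decomposes along the simple factors, forcing each of its projections again to have maximal rank; this is routine given condition $(\star)$.
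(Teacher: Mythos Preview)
Your proposal is correct and follows essentially the same route as the paper. The paper argues the first assertion by contradiction (infinitely many orbits would force the number of orbits defined over $\mathbb{F}_{p^d}$, hence $N_d$, to be unbounded via Theorem~\ref{FinOrb}), while you phrase the same idea constructively as an exhaustion along the cofinal sequence $d=n!$; for the ``in particular'' clause the paper cites \cite{Barry} for the constancy of $N_d$ in the simple classical case and \cite{CFP}, Proposition~1.12 for the product decomposition $\mathbb{E}(\sum R_i,\bigoplus_i\mathfrak{g}_i)\cong\prod_i\mathbb{E}(R_i,\mathfrak{g}_i)$, which you instead derive directly by projecting onto the simple summands---otherwise the reductions through $Z(G)\cdot(G,G)$ and the separable isogeny are identical.
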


\begin{proof}
Suppose to the contrary that $\mathbb{E}(r,\mathrm{Lie}(G))$ is an infinite union of $G$-orbits.  Then the number of $G$-orbits defined over $\mathbb{F}_q$ approaches infinity as $d$ gets large.  By Theorem \ref{FinOrb}, the number of $G$-orbits defined over $\mathbb{F}_q$ is at most $N_d$, which is bounded, providing a contradiction.

If $G$ is a simple group of classical type, and $r=R(\mathfrak{g})$, it is actually the case that $N_d$ is a constant sequence (see \cite{Barry}), so that $\mathbb{E}(r,\mathfrak{g})$ is a finite union of $G$-orbits.  Now, suppose $G$ is a direct product of simple groups of classical type $G_1\times\ldots\times G_m$.  By Proposition $1.12$ in \cite{CFP}, we have an isomorphism of projective $G$-varieties
\[
\prod_{i=1}^m\mathbb{E}(R_i,\mathfrak{g}_i)\to \mathbb{E}\left(\sum_{i=1}^mR_i,\bigoplus_{i=1}^m\mathfrak{g}_i\right)
\]
where $\mathfrak{g}_i=\operatorname{Lie}(G_i)$ and $R_i=R(\mathfrak{g}_i)$.  This isomorphism is given by sending the $m$-tuple $(\epsilon_1,\ldots,\epsilon_m)$ to $\epsilon_1\oplus\ldots\oplus\epsilon_m$.  Now $G=G_1\times\ldots\times G_m$ acts componentwise, and the action of each $G_i$ has finitely many orbits, so the same is true of the action of $G$.

If $G$ is an almost-direct product of simple groups of classical type, then the isogeny $G_1\times\ldots\times G_m\twoheadrightarrow G$ is separable by condition $(\star)$, so we have $\mathfrak{g}=\operatorname{Lie}(G)=\mathfrak{g}_1\oplus\ldots\oplus\mathfrak{g}_m$, and the orbits of the action of $G$ on $\mathbb{E}(r,\mathfrak{g})$ coincide with those of the action of $G_1\times\ldots\times G_m$ on $\mathbb{E}(r,\mathfrak{g})$.

Finally, if $G$ is connected and reductive, then since $G=Z(G)\cdot (G,G)$, the $G$-orbits of $\mathbb{E}(r,\mathfrak{g})$ coincide with the $(G,G)$-orbits.
\end{proof}

\begin{remark}
We also expect, but are unable to verify that $N_d$ is constant for the exceptional simple groups.  If this were true, then $\mathbb{E}(R(\mathrm{Lie}(G)),\mathrm{Lie}(G))$ is a finite union of $G$-orbits for all connected, reductive groups $G$.
\end{remark}

\begin{example}
For an example illustrating that our reductive hypothesis may be unnecessary, consider the non-reductive group $U_3$, the unipotent radical of $B_3$, the group of upper triangular $3\times 3$ matrices.  Example $1.7$ in \cite{CFP} shows that $\mathbb{E}(2,\mathfrak{u}_3)\cong\mathbb{P}^1$.  Explicitly, any elementary subalgebra has a basis of the form:

\begin{equation}\label{Basis}\left\{\begin{pmatrix}0&0&1\\0&0&0\\0&0&0\end{pmatrix},\begin{pmatrix}0&a&0\\0&0&b\\0&0&0\end{pmatrix}\right\}\end{equation}
and this basis is unique up to scalar multiple of the vector $(a,b)$.  A computation shows that each such subalgebra is fixed under conjugation by $U_3$, so that the $G$-variety $\mathbb{E}(2,\mathfrak{u}_3)\cong\mathbb{P}^1$ has infinitely many $G$-orbits (each point is an orbit).  However, only the $\mathbb{F}_q$-rational points of $\mathbb{P}^1$, of which there are finitely many, are orbits defined over $\mathbb{F}_q$.

\end{example}

\begin{example}\label{GL32}
Let $G=\mathrm{GL}_3$, $p\ge 3$, and $r=2$.  Any elementary subalgebra $\epsilon\in\mathbb{E}(2,\mathfrak{gl}_3)$ can be put in upper-triangular form, so $\epsilon$ is conjugate to a subalgebra $\epsilon'$ with basis given by \eqref{Basis} for some $[a:b]\in\mathbb{P}^1$.  In general, the element $[a:b]$ is not defined by $\epsilon$, as conjugating $\epsilon '$ by
\[
\begin{pmatrix}
\lambda&0&0\\0&1&0\\0&0&1
\end{pmatrix}
,\;\lambda\ne 0
\]
gives an upper triangular subalgebra corresponding to $[\lambda a:b]$.  It follows that all subalgebras $\epsilon\in\mathbb{E}(2,\mathfrak{gl}_3)$ that have an element of rank $2$ are conjugate.  The dimension of this orbit is shown to be $4$ in Example $3.20$ of \cite{CFP}.

The only other subalgebras are those whose non-zero elements all have rank equal to $1$.  These subalgebras are conjugate to upper-triangular subalgebras corresponding to the points $[1:0]$ and $[0:1]$.  These two upper-triangular subalgebras are not conjugate (in short, the conditions for a matrix $A$ to conjugate $[1:0]$ to $[0:1]$ require that $\det(A)=0$, a contradiction).  The dimension of each of these two distinct orbits is $2$ (Example 3.20, \cite{CFP}).  We have thus verified that $\mathbb{E}(2,\mathfrak{gl}_3)$ is the union of three $\mathrm{GL}_3$-orbits, all of which are defined over $\mathbb{F}_p$.  As expected from Theorem \ref{FinOrb}, any elementary abelian $p$-subgroup of rank $2d$ in $\mathrm{GL}_3(\mathbb{F}_q)$ is conjugate to exactly one of the groups $\langle I+\lambda E_{12}+\lambda E_{23},I+\mu E_{13}\rangle$, $\langle I+\lambda E_{23},I+\mu E_{13}\rangle$, and $\langle I+\lambda E_{12},I+\mu E_{13}\rangle$.  Here $E_{ij}$ is the matrix whose only non-zero entry is a $1$ in the $i$th row and $j$th column.  This example will be further developed in Proposition \ref{A23} below.
\end{example}

\begin{example}\label{InfOrb}
The following example (due to R. Guralnick) shows that even if $G$ is connected and reductive, $\mathbb{E}(r,\mathfrak{g})$ may be an infinite union of orbits.  Let $G=\mathrm{GL}_{2n}$ and let $\epsilon$ be the elementary subalgebra of $\mathfrak{g}=\mathfrak{gl}_{2n}$ of dimension $n^2$ whose matrices only have nonzero entries in the upper-right $n\times n$ block.  For any $r\le n^2$, we have $\mathrm{Grass}(r,\epsilon)\subset\mathbb{E}(r,\mathfrak{g})$ so that $\dim(\mathbb{E}(r,\mathfrak{g}))\ge\dim(\mathrm{Grass}(r,\epsilon))=(n^2-r)r.$  If $r$ and $n^2$ are such that $(n^2-r)r>4n^2$, then $\dim(\mathbb{E}(r,\mathfrak{g}))>\dim(G)$, so that $\mathbb{E}(r,\mathfrak{g})$ is not a finite union of $G$-orbits.
\end{example}

\begin{question}\label{GroupCond}
As with nilpotent orbits of $\mathfrak{g}$, we can place a partial ordering on the $G$-orbits of $\mathbb{E}(r,\mathfrak{g})$ by $\mathcal{O}\le\mathcal{O}'$ if and only if $\overline{\mathcal{O}}\subset\overline{\mathcal{O}'}$.  For classical Lie algebras, the ordering on nilpotent orbits $(r=1)$ corresponds to the dominant ordering on Jordan type.  For $r>1$, given two $G$-conjugacy classes $C$ and $C'$ of $\mathbb{F}_q$-linear elementary abelian $p$-groups of rank $rd$ in $G(\mathbb{F}_q)$ with corresponding orbits $\mathcal{O}$ and $\mathcal{O}'$ defined over $\mathbb{F}_q$, is there some group-theoretic condition on $C$ and $C'$ that determines when $\mathcal{O}\le\mathcal{O}'$?  In other words, can we describe the partial ordering on orbits in the group setting?  Notice that the existence of a unique maximal element in the partial order implies that $\mathbb{E}(r,\mathfrak{g})$ is irreducible.  Describing the partial order in the group setting might allow us to find further examples of groups $G$ such that $\mathbb{E}(r,\mathrm{Lie}(G))$ is irreducible.
\end{question}

\begin{example}\label{JordanType}
For the case $G=\mathrm{GL}_n$ and $r=1$, the answer to Question \ref{GroupCond} is already known.  For each unipotent $g\in \mathrm{GL}_n(\mathbb{F}_q)$, we know by the theory of Jordan forms that $g$ is conjugate to a direct sum of Jordan blocks, all with eigenvalue $1$.  As in the nilpotent case, the orbits are ordered by Jordan type of the corresponding unipotent elements.  This result is expected, since Springer has shown that $\mathcal{U}(\mathrm{GL}_n)$ and $\mathcal{N}(\mathfrak{gl}_n)$ are isomorphic as $\mathrm{GL}_n$-varieties.
\end{example}

\begin{remark}
Example \ref{JordanType} suggest that the answer to Question \ref{GroupCond} for classical Lie algebras may lie in the Jordan types of the elements of $C$ and $C'$.  That is, there may be a condition on the Jordan types of elements in $C$ and $C'$ that determines when $\mathcal{O}\le\mathcal{O}'$.
\end{remark}

\section{Irreducibility of $\mathbb{E}(r,\mathfrak{g})$}\label{IrredSec}

E. Friedlander has asked for sufficient conditions such that $\mathbb{E}(r,\mathfrak{g})$ is irreducible.  Here we use known results on the irreducibility of the commuting variety of nilpotent matrices to deduce irreducibility for $\mathbb{E}(r,\mathfrak{g})$.

Work of A. Premet in \cite{Premet} shows that $\mathbb{E}(2,\mathfrak{gl}_n)$ is irreducible for all $n$.  This is observed in Example 1.6 of \cite{CFP}.  Premet shows that under less restrictive hypotheses on $\mathfrak{g}$ and $p$ than we consider here, the variety of pairs of commuting nilpotent elements in $\mathfrak{g}$ is equidimensional, having irreducible components which are in one-to-one correspondence with distinguished nilpotent orbits.  For $\mathfrak{g}=\mathfrak{gl}_n$, there is only one distinguished nilpotent orbit, namely, the regular orbit.  It follows that $\mathcal{C}_2(\mathcal{N}(\mathfrak{gl}_n))$ is irreducible.  Since open sets of irreducible sets are themselves irreducible, and continuous images of irreducible sets are irreducible, the map of algebraic varieties $\mathcal{C}_2(\mathcal{N}(\mathfrak{gl}_n))^{\circ}\twoheadrightarrow\mathbb{E}(2,\mathfrak{gl}_n)$ discussed at the end of \S \ref{Review} shows that $\mathbb{E}(2,\mathfrak{gl}_n)$ is irreducible.  This same argument shows that $\mathbb{E}(1,\mathfrak{g})$ is irreducible for all $\mathfrak{g}$, as the restricted nullcone $\mathcal{N}(\mathfrak{g})$ is irreducible.

It is also known that $\mathcal{C}_r(\mathcal{N}(\mathfrak{gl}_n))$ is irreducible for $r=3$ and $n\le 6$, so by similar reasoning, it follows that $\mathbb{E}(r,\mathfrak{gl}_n)$ is irreducible for the corresponding pairs $(r,n)$.  We should note that we have proven the implication
\[
\mathcal{C}_r(\mathcal{N}(\mathfrak{g}))\text{ irreducible}\Longrightarrow\mathbb{E}(r,\mathfrak{g})\text{ irreducible}
\]
however, the converse is not true.  In Corollary 4 of (\cite{Sivic}) it is shown that $\mathcal{C}_r(\mathcal{N}(\mathfrak{gl}_n))$ is reducible for all $r\ge 4$ and $n\ge 4$, but Theorem 2.9 in \cite{CFP} shows that $\mathbb{E}(n^2,\mathfrak{gl}_{2n})$ is irreducible for all $n$.  We summarize the above discussion in the following theorem.

\begin{theorem}\label{Irreducible}
The variety $\mathbb{E}(r,\mathfrak{gl}_n)$ is irreducible for the following ordered pairs $(r,n)$: $(1,n)$ for any $n$, $(2,n)$ for any $n$, $(3,n)$ for $n\le 6$, and $(n^2,2n)$ for any $n$.
\end{theorem}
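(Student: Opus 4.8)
The plan is to split the four families of pairs $(r,n)$ into two groups: the pairs $(1,n)$, $(2,n)$, and $(3,n)$ with $n\le 6$, where irreducibility is transported from known irreducibility of the commuting variety of nilpotent matrices, and the family $(n^2,2n)$, where that strategy provably fails and a direct result must be cited.

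For the first group I would exploit the surjection of algebraic varieties $\pi\colon\mathcal{C}_r(\mathcal{N}(\mathfrak{g}))^{\circ}\twoheadrightarrow\mathbb{E}(r,\mathfrak{g})$ recalled at the end of \S\ref{Review}, which sends an $r$-tuple of pairwise-commuting, nilpotent, linearly independent elements to its $k$-span. The relevant observation is that $\mathcal{C}_r(\mathcal{N}(\mathfrak{g}))^{\circ}$ is the locus inside the commuting variety $\mathcal{C}_r(\mathcal{N}(\mathfrak{g}))$ cut out by the open condition of linear independence, hence a Zariski-open subvariety; a nonempty open subset of an irreducible variety is irreducible, and the continuous image of an irreducible variety is irreducible. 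This yields the implication
\[
\mathcal{C}_r(\mathcal{N}(\mathfrak{g}))\text{ irreducible}\ \Longrightarrow\ \mathbb{E}(r,\mathfrak{g})\text{ irreducible},
\]
valid as long as $\mathbb{E}(r,\mathfrak{g})\neq\emptyset$ so that $\mathcal{C}_r(\mathcal{N}(\mathfrak{g}))^{\circ}$ is nonempty. It then remains to feed in the inputs: $\mathcal{N}(\mathfrak{g})$ is irreducible for $r=1$; Premet's theorem in \cite{Premet} gives irreducibility of $\mathcal{C}_2(\mathcal{N}(\mathfrak{gl}_n))$ for all $n$ (the commuting pairs of nilpotents form an equidimensional variety with components indexed by distinguished nilpotent orbits, and $\mathfrak{gl}_n$ has only the regular one); and the known irreducibility of $\mathcal{C}_3(\mathcal{N}(\mathfrak{gl}_n))$ for $n\le 6$ (see \cite{Sivic}) handles $r=3$.

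For the family $(n^2,2n)$ the above route is unavailable: by Corollary 4 of \cite{Sivic}, $\mathcal{C}_r(\mathcal{N}(\mathfrak{gl}_n))$ is reducible whenever $r\ge 4$ and $n\ge 4$, so for $n\ge 2$ the source of $\pi$ is reducible and the implication above says nothing. Here I would instead cite Theorem 2.9 of \cite{CFP}, which proves directly that $\mathbb{E}(n^2,\mathfrak{gl}_{2n})$ is irreducible for every $n$. Assembling the four cases then completes the proof. The step I expect to require the most care is not any single computation but the recognition that the proof genuinely has two qualitatively different halves: the first is a soft topological transfer along $\pi$ that only needs irreducibility of a commuting variety as a black box, while the family $(n^2,2n)$ sits outside the reach of that transfer and depends on the hard geometric input of \cite{CFP}. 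Verifying that $\mathbb{E}(r,\mathfrak{gl}_n)$ is nonempty (equivalently $r\le R(\mathfrak{gl}_n)$) for the listed pairs, so that $\mathcal{C}_r(\mathcal{N}(\mathfrak{g}))^{\circ}$ is nonempty, is the only bookkeeping needed.
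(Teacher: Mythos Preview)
Your proposal is correct and matches the paper's own argument essentially line for line: the paper likewise transports irreducibility of $\mathcal{C}_r(\mathcal{N}(\mathfrak{gl}_n))$ through the open-subset/surjection $\mathcal{C}_r(\mathcal{N}(\mathfrak{g}))^{\circ}\twoheadrightarrow\mathbb{E}(r,\mathfrak{g})$ for the cases $r=1$ (nullcone), $r=2$ (Premet), and $r=3$, $n\le 6$ (\cite{Sivic}), and then invokes Theorem~2.9 of \cite{CFP} directly for $(n^2,2n)$ after observing via Corollary~4 of \cite{Sivic} that the commuting-variety route is unavailable there. Your added remark about nonemptiness of $\mathcal{C}_r(\mathcal{N}(\mathfrak{g}))^{\circ}$ is a small piece of bookkeeping the paper leaves implicit.
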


\begin{question}
The reducibility of the variety of $r$-tuples of pairwise-commuting matrices $\mathcal{C}_r(\mathfrak{gl}_n)$ and the variety of $r$-tuples of pairwise-commuting nilpotent matrices $\mathcal{C}_r(\mathcal{N}(\mathfrak{gl}_n))$ has been extensively studied.  As we've already observed, since $\mathcal{C}_r(\mathcal{N}(\mathfrak{gl}_n))^{\circ}$ is open in $\mathcal{C}_r(\mathcal{N}(\mathfrak{gl}_n))$, the irreducibility of the latter implies that of the former.  Are there counterexamples to the converse?  Also, as in the case of $\mathcal{C}_r(\mathcal{N}(\mathfrak{gl}_n))$, is $\mathcal{C}_r(\mathcal{N}(\mathfrak{gl}_n))^{\circ}$ reducible for large enough $r$ and $n$?
\end{question}

\begin{question}\label{RegOrbQuest}
If $X\in\mathfrak{gl}_n$ is a regular nilpotent element and $\epsilon$ is the $n-1$-plane with basis given by $\{X,X^2,\ldots,X^{n-1}\}$, is the orbit of $X$ dense in $\mathbb{E}(n-1,\mathfrak{gl}_n)$?  Proposition 3.19 and Example 3.20 of \cite{CFP} show that it is open in general and dense in the case $n=3$.  We have shown above that the question also has an affirmative answer for the cases $n=4$.  If the orbit of $X$ is indeed dense, then $\mathbb{E}(n-1,\mathfrak{gl}_n)$ is irreducible for all $n\ge 1$.
\end{question}

\section{Computations for $\mathrm{GL}_n$}\label{Group}

Since the $G$-orbits of $\mathbb{E}(r,\mathfrak{g})$ defined over $\mathbb{F}_q$ are in bijection with the $G$-conjugacy classes of $\mathbb{F}_q$-linear elementary abelian $p$-groups of rank $rd$ in $G(\mathbb{F}_q)$, we can bound the number of such $G$-orbits by computing in the finite group $G(\mathbb{F}_q)$.  In this section we make some computations for $G=\mathrm{GL}_n$, and $d=1$, using the ``ElementaryAbelianSubgroups" function in Magma.

The values appearing in Tables \ref{NumOrb} and \ref{DimOrb} below, as well as the computation of Example \ref{34}, rely on Conjecture \ref{OrbitSize}.  Before stating the conjecture, we introduce some notation.  Any reductive algebraic group $G$ has a Chevalley $\mathbb{Z}$-form, denoted $G_{\mathbb{Z}}$.  Let $G_p=G_{\mathbb{Z}}\times_{\mathbb{Z}}\mathrm{Spec}\;\overline{\mathbb{F}_p}$.  Also, if $\varphi:G\to G'$ is a map of reductive groups defined over $\mathbb{Z}$, then the differential $d\varphi$ induces a map from the $G_p$-orbits of $\mathbb{E}(r,\mathrm{Lie}(G_p))$ to the $G'_p$-orbits of $\mathbb{E}(r,\mathrm{Lie}(G'_p))$.  This follows because $d\varphi\circ\mathrm{Ad}_g=\mathrm{Ad}_{\varphi(g)}\circ d\varphi$.  By abuse of notation, we will also denote the induced map on orbits by $d\varphi$.

\begin{conjecture}\label{OrbitSize}
\leavevmode
\begin{enumerate}
\item For every reductive algebraic group $G$ and for any pair of primes $p,p'$ satisfying condition $(\star)$, there is a natural dimension-preserving bijection $f_{p,p'}$ between the $G_p$-orbits of $\mathbb{E}(r,\mathrm{Lie}(G_p))$ defined over $\mathbb{F}_{p^d}$ and the $G_{p'}$-orbits of $\mathbb{E}(r,\mathrm{Lie}(G_{p'}))$ defined over $\mathbb{F}_{p'^d}$.  By natural, we mean that for any primes $p$, $p'$, and $p''$ satisfying condition $(\star)$ and any map $\varphi:G\to G'$ of reductive groups defined over $\mathbb{Z}$ we have:
\begin{enumerate}
\item $f_{p,p''}=f_{p',p''}\circ f_{p,p'}$ and
\item $d\varphi\circ f_{p,p'}=f'_{p,p'}\circ d\varphi$, where $f_{p,p'}$ and $f'_{p,p'}$ are the bijections for $G$ and $G'$ respectively.
\end{enumerate}
\item Fix a prime $p$ satisfying condition $(\star)$ and a $G_p$-orbit of $\mathbb{E}(r,\mathrm{Lie}(G_p))$ of dimension $e$ defined over $\mathbb{F}_{p^d}$, denoted $\mathcal{O}_p$.  Then for all primes $p'$ satisfying condition $(\star)$ the counting function $p'$ to $\#f_{p,p'}(\mathcal{O}_{p})(\mathbb{F}_{p'})$ is a polynomial in $p'$ of degree $ed$.
\end{enumerate}
\end{conjecture}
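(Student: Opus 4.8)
The plan is to place all the data into a single family over $\mathrm{Spec}\,\mathbb{Z}[1/N]$ and then transport structure between the fibers at $p$ and at $p'$. First I would fix $N$ large enough that $G$ extends to a split reductive group scheme over $\mathbb{Z}[1/N]$, that condition $(\star)$ holds for every prime $p\nmid N$, and that the canonical Springer isomorphism of Theorem \ref{CanonSpringer} descends to a $G$-equivariant isomorphism over $\mathbb{Z}[1/N]$ (possible since it is assembled from the Seitz and \cite{CLN} isomorphisms, which are given by polynomial formulas with bounded denominators). As in \S\ref{Rationality}, the variety $\mathbb{E}(r,\mathfrak{g})$ — cut out in $\mathrm{Grass}(r,\mathfrak{g})$ by the commuting, $[p]$-triviality, and rank conditions, all with $\mathbb{Z}$-coefficients — is then a $G$-scheme over $\mathbb{Z}[1/N]$ whose fiber at $p\nmid N$ is the $G_p$-variety $\mathbb{E}(r,\mathrm{Lie}(G_p))$.

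Next I would build $f_{p,p'}$ by a spreading-out (constructibility) argument for the $G$-action on $\mathbb{E}(r,\mathfrak{g})_{\mathbb{Z}[1/N]}$. After enlarging $N$, decompose $\mathbb{E}(r,\mathfrak{g})_{\mathbb{Z}[1/N]}$ into finitely many $G$-stable locally closed subschemes $\mathcal{S}_1,\dots,\mathcal{S}_m$, flat over $\mathbb{Z}[1/N]$, and equip each $\mathcal{S}_i$ with a relative ``orbit-parameter'' scheme $\mathcal{T}_i$ over $\mathbb{Z}[1/N]$ (a relative quotient, or a section of the flattening stratification of the quotient stack $[\mathcal{S}_i/G]$) whose geometric fiber at $p$ parametrizes the $G_p$-orbits inside $(\mathcal{S}_i)_p$. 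Because the whole datum is defined over $\mathbb{Z}[1/N]$, the identification of $\mathcal{T}_i(\overline{\mathbb{F}_p})$ with $\mathcal{T}_i(\overline{\mathbb{F}_{p'}})$ as abstract parameter sets is canonical and intertwines geometric Frobenius, so the statement ``this orbit is defined over $\mathbb{F}_{p^d}$'' is exactly ``the parameter point is $\mathrm{Frob}_{p^d}$-fixed,'' a condition that passes verbatim from $p$ to $p'$ and matches the full field of definition, not just $\mathbb{F}_{p^d}$. Defining $f_{p,p'}$ stratum by stratum through this common parametrization makes the cocycle identity $f_{p,p''}=f_{p',p''}\circ f_{p,p'}$ tautological; dimension-preservation follows from generic flatness (after enlarging $N$ one may take the $\mathcal{T}_i$ and the universal orbit families over them flat over $\mathbb{Z}[1/N]$, so the dimension of an orbit depends only on its parameter point, not on the characteristic); and naturality under a $\mathbb{Z}$-morphism $\varphi\colon G\to G'$ holds because $d\varphi$ is itself defined over $\mathbb{Z}[1/N]$ and therefore carries the $(\mathcal{S}_i,\mathcal{T}_i)$ for $G$ to those for $G'$.

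For part (2) I would pass to the finite group side. Fix $\mathcal{O}_p$ of dimension $e$, defined over $\mathbb{F}_{p^d}$; by Lemma \ref{RatPts} it has an $\mathbb{F}_{p^d}$-point $\epsilon_0$, and by the last assertion of Theorem \ref{FinOrb} the count $\#\mathcal{O}_p(\mathbb{F}_{p^d})$ equals the size of the $G$-conjugacy class of the corresponding $\mathbb{F}_{p^d}$-linear elementary abelian subgroup. Setting $H=\mathrm{Stab}_G(\epsilon_0)$, so that $\mathcal{O}_p\cong G/H$, Lang's theorem (Theorem \ref{Lang}) shows that, once $H$ is smooth, $\#\mathcal{O}_p(\mathbb{F}_q)$ is determined by the group orders $\#G(\mathbb{F}_q)$ and $\#H(\mathbb{F}_q)$ — their quotient when $H$ is connected, a controlled sum over components and their twists in general. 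Since $\#G(\mathbb{F}_q)$ is the order polynomial of the split reductive group (degree $\dim G$ in $q$) and $\dim G-\dim H=\dim\mathcal{O}_p=e$, the remaining content is that the stabilizers $H$ arising this way are \emph{of polynomial count}, i.e. $\#H(\mathbb{F}_q)$ is a polynomial in $q$ of degree $\dim H$ depending only on the parameter point of $\mathcal{O}_p$ from part (1). Granting this, $\#\mathcal{O}_p(\mathbb{F}_{q'})$ with $q'=p'^d$ is a polynomial in $q'$ of degree $e$, hence a polynomial in $p'$ of degree $ed$ — the stated count, read over the field of definition of $f_{p,p'}(\mathcal{O}_p)$ (and over $\mathbb{F}_{p'}$ itself when $\mathcal{O}_p$ is already $\mathbb{F}_p$-rational).

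The main obstacle lies in two places. In part (1), by Example \ref{InfOrb} the generic situation is that $\mathbb{E}(r,\mathfrak{g})$ is an infinite union of orbits, so one must produce genuinely $p$-independent parameter schemes $\mathcal{T}_i$ for the orbits in each stratum — a task entangled with the unsolved problem of classifying $G$-orbits on $\mathbb{E}(r,\mathfrak{g})$ at all — and, crucially, one must show that the finitely many bad primes excluded by the generic-flatness construction are never primes satisfying $(\star)$. In part (2), the crux is the polynomial-count property of the stabilizers $H$: this is classical when $H$ is smooth with connected reductive structure, but the scheme-theoretic stabilizer can be non-smooth in small characteristic and an orbit need not be of Tate type, so one is effectively asserting that $(\star)$ rules this out — equivalently that $\mathbb{E}(r,\mathfrak{g})$ admits a uniform affine paving, or at least has polynomial-count cohomology uniformly in $p$. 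I would first run the whole program for $G=\mathrm{GL}_n$, where the $G$-orbits on $\mathbb{E}(r,\mathfrak{gl}_n)$ are governed by the module theory of $r$-tuples of commuting nilpotent matrices and one can hope to import known rationality and cell-decomposition results for $\mathcal{C}_r(\mathcal{N}(\mathfrak{gl}_n))$; success there would already cover the computations of \S\ref{Group}.
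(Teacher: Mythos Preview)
The statement you were asked to prove is labeled \emph{Conjecture} in the paper, and the paper offers no proof of it whatsoever. It is stated as an open problem whose truth is \emph{assumed} in order to interpret the Magma computations of \S\ref{Group}; the text says explicitly that ``the values appearing in Tables \ref{NumOrb} and \ref{DimOrb} \ldots\ rely on Conjecture \ref{OrbitSize}'' and that ``the usefulness of assuming the conjecture'' is that it lets one read off orbit dimensions from point counts. So there is no proof in the paper to compare your proposal against.

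Your write-up is not a proof either, and to your credit you say so: the final paragraph names the two genuine obstructions and leaves them open. Those obstructions are exactly the right ones. For part (1), Example \ref{InfOrb} shows that in the generic range the orbit space of $\mathbb{E}(r,\mathfrak{g})$ is not of finite type, so the existence of your parameter schemes $\mathcal{T}_i$ with the required uniformity in $p$ is essentially equivalent to the conjecture itself --- spreading-out and generic flatness do not produce them for free, and there is no reason the finitely many primes you throw away should be exactly those violating $(\star)$. For part (2), polynomial count of the stabilizer $H$ is again the whole content: even when $H$ is smooth, its component group can carry a nontrivial Frobenius action that varies with $p$, and nothing in the paper (or in the standard literature) guarantees that $(\star)$ forces $G/H$ to be of Tate type. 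Your suggestion to first carry this out for $\mathrm{GL}_n$ is sensible and would already justify the tables in \S\ref{Group}, but as written the proposal is a plausible strategy, not a proof, and the paper makes no stronger claim.
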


The usefulness of assuming the conjecture in what follows is that it allows us to compute the dimension of an orbit by finding the degree of the polynomial which counts the orbit's rational points.  Table \ref{NumOrb} below records experimental results for upper bounds on the number of $\mathrm{GL}_n$-orbits of $\mathbb{E}(r,\mathfrak{gl}_n)$ defined over $\mathbb{F}_p$ for varying $r$ and $n$.  To be specific, Table \ref{NumOrb} records (roughly) the number of conjugacy classes of elementary abelian subgroups of rank $r$ in $\mathrm{GL}_n(\mathbb{F}_p)$.  In light of Conjecture \ref{OrbitSize}, the primes used in the computation have been suppressed, although it is important to note that in all cases $p\ge n=h(\mathrm{GL}_n)$.  The numbers recorded are upper bounds because it is not clear to the author how to determine when two $G(\mathbb{F}_p)$-conjugate subgroups merge in some $G(\mathbb{F}_q)$ (in certain cases, it can be inferred from the sizes of the conjugacy classes which classes merge.  It is the detecting of this merging that causes the numbers in Table \ref{NumOrb} to be slightly less than the number of conjugacy classes of elementary abelian subgroups of rank $r$ in $\mathrm{GL}_n(\mathbb{F}_p)$).  By Theorem \ref{Irreducible}, all the varieties represented here are irreducible, except for those corresponding to $(r,n)=(4,5),\;(5,5)$ and $(5,6)$.  $\mathbb{E}(6,\mathfrak{gl}_5)$ is known to be reducible, which follows from the fact that for $n>1$, $\mathbb{E}(n(n+1),\mathfrak{gl}_{2n+1})$ is the disjoint union of two connected components both isomorphic to $\mathrm{Grass}(n,2n+1)$ (\cite{CFP}, Theorem 2.10).  It is not known to the author if $\mathbb{E}(5,\mathfrak{gl}_5)$ or $\mathbb{E}(4,\mathfrak{gl}_5)$ are irreducible.  Table \ref{DimOrb} records the dimension of the largest orbit from Table \ref{NumOrb}.  In all known cases, this dimension is equal to the dimension of $\mathbb{E}(r,\mathfrak{g})$, suggesting the dimension of $\mathbb{E}(r,\mathfrak{g})$ may be determined by its largest orbit defined over $\mathbb{F}_p$.

\begin{table}[ht]
\caption{Upper bounds for $|\{G\text{-orbits of }\mathbb{E}(r,\mathfrak{gl}_n)\text{ defined over }\mathbb{F}_p\}|$}\label{NumOrb}
\centering
\begin{tabular}{c|c c c c c c}
\hline\hline
$n$ &$r=1$ & $r=2$ & $r=3$ & $r=4$&$r=5$&$r=6$\\ [0.5ex]
\hline
2&1&0&0&0&0&0 \\
3&2 &3&0&0&0&0 \\
4 & 4 & 10&8&1&0&0 \\
5 &6& 35&67&32&5&2\\[1ex]
\hline
\end{tabular}\\[1ex]
\end{table}

\begin{table}[ht]
\caption{Dimension of largest orbit of $\mathbb{E}(r,\mathfrak{gl}_n)$ defined over $\mathbb{F}_p$}\label{DimOrb}
\centering
\begin{tabular}{c|c c c c c c}
\hline\hline
$n$ &$r=1$ & $r=2$ & $r=3$ & $r=4$&$r=5$&$r=6$\\ [0.5ex]
\hline
2&1&0&0&0&0&0 \\
3&5&4&0&0&0&0 \\
4 &11&11&9&4&0&0 \\
5 &19&20&19&16&11&6\\[1ex]
\hline
\end{tabular}\\[1ex]
\end{table}

\begin{question}\label{Formula}
Can we expect a formula for the $(r,n)$ entry of either table?
\end{question}

Example \ref{JordanType} shows that the entry in the $n$th row of column $r=1$ in Table \ref{NumOrb} is exact, and is equal to $p(n)-1$, where $p(n)$ is the number of partitions of the integer $n$.  We lose the trivial partition $1+1+\ldots+1=n$ because this corresponds to the trivial subgroup.  For $r\ge 2$, we would need more exact data to determine if there is a closed formula for the number of orbits defined over $\mathbb{F}_p$.  This closed formula may involve $p(n)$.

There is at least hope for an affirmative answer to Question \ref{Formula} for Table \ref{DimOrb}, as evidenced by the following.  Since $\mathbb{E}(1,\mathfrak{gl}_n)$ is the projectivized nullcone, the fact that the entries in column $r=1$ are $n^2-n-1$ follows from the well-known formula $\dim(\mathcal{N}(\mathfrak{g}))=n^2-n$.  Furthermore, in Example $1.6$ of \cite{CFP} the authors use work of Premet in \cite{Premet} to establish that the entries in column $r=2$ are $n^2-5$, which agrees with our computation.  Also in \cite{CFP}, the identifications of $\mathbb{E}(n^2,\mathfrak{gl}_{2n})$ with $\mathrm{Grass}(n,2n)$ and $\mathbb{E}(n(n+1),\mathfrak{gl}_{2n+1})$ with $\mathrm{Grass}(n,2n+1)$ give the following formulas (which agree with Table \ref{DimOrb}):
\[\dim(\mathbb{E}(n^2,\mathfrak{gl}_{2n}))=n(2n-n)=n^2\]
\[\dim(\mathbb{E}(n(n+1),\mathfrak{gl}_{2n+1})=n(2n+1-n)=n(n+1)\]

It is also known that if $\mathcal{C}_r(\mathcal{N}(\mathfrak{gl}_n))$ is irreducible, then it has dimension $(n+r-1)(n-1)$, as reviewed in Proposition $1$ and Corollary $2$ of \cite{Sivic}.  This together with the map $\mathcal{C}_r(\mathcal{N}(\mathfrak{g}))^{\circ}\twoheadrightarrow \mathbb{E}(r,\mathfrak{g})$ discussed at the end of \S \ref{Review}, whose fibers are $\mathrm{GL}_r$-torsors, show that
\begin{equation}\label{Dim}
\dim(\mathbb{E}(r,\mathfrak{gl}_n))=(n+r-1)(n-1)-r^2
\end{equation}
for all ordered pairs $(r,n)$ for which $\mathcal{C}_r(\mathcal{N}(\mathfrak{gl}_n))$ is irreducible (all known such ordered pairs are presented in \S \ref{IrredSec}).  Notice that equation \eqref{Dim} subsumes the results for $r=1$ and $r=2$, and surprisingly it even agrees with Table \ref{DimOrb} in entries $(4,5)$ and $(5,5)$ for which $\mathcal{C}_r(\mathcal{N}(\mathfrak{gl}_n))$ is known to be reducible.  In fact, the only entries of Table \ref{DimOrb} that don't agree with equation \eqref{Dim} are $(3,3)$, $(4,4)$, and $(6,5)$.

The following proposition is another piece of evidence that the entries of Table \ref{DimOrb} may have a closed form.

\begin{proposition}\label{RegOrb}
Let $\mathcal{O}$ be the open $\mathrm{GL}_n$-orbit of $\mathbb{E}(n-1,\mathfrak{gl}_n)$ containing the subalgebra $\epsilon$ spanned by the powers of a regular nilpotent element $X$ (cf. \cite{CFP},Proposition $3.19$).  Then $\dim(\mathcal{O})=(n-1)^2$.
\end{proposition}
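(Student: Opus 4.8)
The plan is to use the orbit--stabilizer formula $\dim\mathcal{O}=\dim\mathrm{GL}_n-\dim\mathrm{Stab}_{\mathrm{GL}_n}(\epsilon)=n^2-\dim\mathrm{Stab}_{\mathrm{GL}_n}(\epsilon)$, so the task reduces to showing that the stabilizer of $\epsilon$ under the conjugation action has dimension $2n-1$. I would take $X$ to be the standard regular nilpotent with $1$'s on the superdiagonal and set $A:=k[X]=\operatorname{span}_k\{I,X,\dots,X^{n-1}\}$, a commutative subalgebra of $\mathfrak{gl}_n$ isomorphic to $k[t]/(t^n)$; then $\epsilon$ is exactly the set of nilpotent elements of $A$, i.e. the maximal ideal of this local ring. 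Since conjugation fixes $I$, an element $g\in\mathrm{GL}_n$ satisfies $g\epsilon g^{-1}=\epsilon$ if and only if $gAg^{-1}=A$, and in that case $g$ automatically sends the nilradical of $A$ to itself; hence $\mathrm{Stab}_{\mathrm{GL}_n}(\epsilon)=N:=N_{\mathrm{GL}_n}(A)$, the normalizer of the subalgebra $A$.

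Next I would study $N$ through the homomorphism $\rho\colon N\to\mathrm{Aut}_{k\text{-alg}}(A)$ given by conjugation. Its kernel is the centralizer $C_{\mathrm{GL}_n}(X)$, which equals the unit group $A^{\times}$ of the local $k$-algebra $A$ and is therefore a connected group of dimension $n$. The target $\mathrm{Aut}_{k\text{-alg}}(A)$ is the group of substitutions $t\mapsto a_1t+\cdots+a_{n-1}t^{n-1}$ with $a_1\ne0$, so it is isomorphic as a variety to $\mathbb{G}_m\times\mathbb{A}^{n-2}$ and has dimension $n-1$. The crux is surjectivity of $\rho$: given $\phi\in\mathrm{Aut}_{k\text{-alg}}(A)$, I would twist the standard module $V=k^n$, which is free of rank one over $A=k[X]$ since $X$ is regular nilpotent, by $\phi$ to get an $A$-module $V_\phi$; this is again free of rank one, hence $A$-isomorphic to $V$, and an $A$-linear isomorphism $g\colon V\to V_\phi$ is precisely an element of $\mathrm{GL}_n$ with $gag^{-1}=\phi(a)$ for all $a\in A$. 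Since $\rho$ then hits every $k$-point of $\mathrm{Aut}_{k\text{-alg}}(A)$ and its image is constructible, $\rho$ is surjective. From the exact sequence $1\to A^\times\to N\xrightarrow{\rho}\mathrm{Aut}_{k\text{-alg}}(A)\to1$ I conclude $\dim N=n+(n-1)=2n-1$ and hence $\dim\mathcal{O}=n^2-(2n-1)=(n-1)^2$.

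The main obstacle is the surjectivity of $\rho$; the rest is routine manipulation of the local Artinian ring $k[t]/(t^n)$. I expect the module-theoretic argument above (a small case of Skolem--Noether) to be the cleanest route, but as a sanity check one can realize the generators of $\mathrm{Aut}_{k\text{-alg}}(A)$ by hand: the torus $\operatorname{diag}(t^{n-1},\dots,t,1)$ conjugates $X$ to $tX$, giving the $\mathbb{G}_m$-factor, and suitable unipotent elements produce the substitutions $t\mapsto t+a_2t^2+\cdots+a_{n-1}t^{n-1}$. I would also remark that no separability hypothesis enters, since the identity $\dim\mathcal{O}=\dim\mathrm{GL}_n-\dim\mathrm{Stab}_{\mathrm{GL}_n}(\epsilon)$ holds at the level of group schemes in any characteristic; we use only that $p\ge h(\mathrm{GL}_n)=n$, which guarantees that $\epsilon$ is a genuine elementary subalgebra of rank $n-1$.
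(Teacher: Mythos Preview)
Your proof is correct and takes a genuinely different, more structural route than the paper's.  Both arguments reduce via orbit--stabilizer to showing $\dim G_\epsilon=2n-1$, but the paper does this by a direct matrix computation that it itself calls ``rather tedious'': taking $X$ to be the standard nilpotent Jordan block, it shows that any $A\in G_\epsilon$ is upper-triangular and that the entries along each superdiagonal are determined by the first two, yielding an explicit isomorphism of $G_\epsilon$ with the open set $\{x_1x_2\ne 0\}\subset\mathbb{A}^{2n-1}$.  You instead identify $G_\epsilon$ with the normalizer of the subalgebra $A=k[X]$ and read off the dimension from the exact sequence $1\to A^\times\to N_{\mathrm{GL}_n}(A)\xrightarrow{\rho}\mathrm{Aut}_{k\text{-alg}}(A)\to 1$, the surjectivity of $\rho$ coming from a clean Skolem--Noether style twisting argument.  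This buys you a conceptual explanation of the number $2n-1=n+(n-1)$ and avoids all coordinate chasing; the paper's approach, by contrast, yields an explicit parametrization of $G_\epsilon$ as a variety.  It is worth noting that the paper records, in a remark immediately following its proof, an alternative argument due to Friedlander and Pevtsova that is close in spirit to yours: two regular nilpotents $X,Y$ define the same $\epsilon$ exactly when $Y=a_1X+\cdots+a_{n-1}X^{n-1}$ with $a_1\ne 0$---precisely the $\mathrm{Aut}_{k\text{-alg}}(A)$ worth of freedom---and combining this $(n-1)$-dimensional ambiguity with the dimension $n^2-n$ of the regular nilpotent orbit gives $(n-1)^2$ directly.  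Your exact sequence is essentially a repackaging of that observation together with the centralizer calculation.
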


\begin{proof}
Let $G=\mathrm{GL}_n$.  We will show the dimension of the stabilizer $G_{\epsilon}$ of $\epsilon$ has dimension $2n-1$, from which we obtain $\dim(\mathcal{O})=\dim(G)-\dim(G_{\epsilon})=n^2-(2n-1)=(n-1)^2$.  We may choose $X$ to be the Jordan block of size $n$ with eigenvalue $0$, so that $G_{\epsilon}$ consists solely of upper-triangular matrices.  In this case, I claim that $G_{\epsilon}$ is isomorphic to the $(2n-1)$-dimensional quasi-affine variety defined by $x_1\ne 0$, $x_2\ne 0$ in $\mathbb{A}^{2n-1}$.  The map defining the isomorphism is given by sending a matrix $A=(a_{ij})$ which normalizes $\mathcal{O}$ to the point $(a_{11},a_{22},a_{12},a_{23},a_{13},a_{24},\ldots,a_{1(n-1)},a_{2n},a_{1n})$.  For injectivity, we must show that the entries in the top two rows of $A$ along with the condition $A\in G_{\epsilon}$ completely determine $A$.  For surjectivity, we must show that any choice of entries in the top two rows of $A$ define a matrix $A\in G_{\epsilon}$ as long as $a_{11}\ne 0$, $a_{22}\ne 0$ and $a_{21}=0$.

What follows is rather tedious, but the basic idea is that the entries along a super-diagonal are determined uniquely by the first two entries in the super-diagonal, and these first two entries may be arbitrary (except in the case of the diagonal, in which case the entries must be non-zero).  By a super-diagonal, we mean any collection of entries of the form $a_{i,j}$ where $j-i=k$ for some fixed $k=1,\ldots,n-1$.

First, notice that $A\in G_{\epsilon}$ if and only if $AXA^{-1}\in\epsilon$.  For $j>i$, the $(i,j)$ entry of $AXA^{-1}$ is
\begin{equation}\label{ijentry}
\frac{a_{ii}a_{i+1,j}}{a_{i+1,i+1}a_{jj}}-\frac{a_{i,i+1}a_{i+2,j}}{a_{i+2,i+2}a_{jj}}+\ldots-\frac{a_{i,j-2}a_{j-1,j}}{a_{j-1,j-1}a_{jj}}+\frac{a_{i,j-1}}{a_{jj}}
\end{equation}
We've assumed $i+j$ is odd, else we should switch the parity of all the signs.  Notice that $a_{ii}\ne 0$ for all $i$ as $A$ is upper-triangular and invertible.  For fixed $k=1,\ldots,n-1$, the condition that $A\cdot X\cdot A^{-1}\in\epsilon$ requires that \eqref{ijentry} is independent of choice of $(i,j)$ such that $j-i=k$.  For $j-i=1$, \eqref{ijentry} simplifies to $a_{ii}/a_{jj}$.  If this expression is to be independent of $(i,j)$ such that $j-i=1$, once we choose $a_{11}\ne 0$ and $a_{22}\ne 0$, then $a_{ii}$ is determined for $i=3,\ldots,n$.  For $k=2,\ldots,n-2$, if \eqref{ijentry} is independent of $(i,j)$ such that $j-i=k$, then once we choose arbitrary $a_{1,k}$ and $a_{2,k+1}$, then $a_{i,k+i-1}$ is determined for $i=3,\ldots,n-k+1$.  Finally, $a_{1n}$ does not appear in \eqref{ijentry} for any $(i,j)$, so it may be chosen arbitrarily.  This shows that $A$ is uniquely determined by its top two rows (injectivity), and any choice of top two rows $a_{11}\ne 0$, $a_{22}\ne 0$ and $a_{21}=0$ defines a matrix $A\in G_{\epsilon}$ (surjectivity).
\end{proof}

\begin{remark}
A different approach to the computation in Proposition \ref{RegOrb} was shown to the author by E. Friedlander and J. Pevtsova.  They noticed that two regular nilpotent elements $X$ and $Y$ define the same subalgebra if and only if $Y=a_1X+a_2X^2+\ldots+a_{n-1}X^{n-1}$, with $a_1\ne 0$.  These $n-1$ degrees of freedom together with the fact that the regular nilpotent orbit is $n^2-n$ shows that the dimension of $\mathcal{O}$ is $n^2-n-(n-1)=(n-1)^2$.
\end{remark}

If Question \ref{RegOrbQuest} has an affirmative answer, or if any subalgebra defined by a regular nilpotent element is in the orbit of largest dimension, then Proposition \ref{RegOrb} computes the dimension of $\mathbb{E}(n-1,\mathfrak{gl}_n)$ to be $(n-1)^2$, which is verified through $n=5$ in Table \ref{DimOrb}, and agrees with equation \eqref{Dim}.

By considering a smaller subspace defined by a regular nilpotent element, the following proposition gives a lower bound on the dimension of $\mathbb{E}(r,\mathfrak{gl}_n)$ for the intermediate region $1\le r< n-1$.

\begin{proposition}
Let $X$ be a regular nilpotent element in $\mathfrak{gl}_n$ and for $1\le r< n-1$ consider the elementary subalgebra $\epsilon_r=\mathrm{Span}\{X,X^2,\ldots,X^r\}$.  Then $\dim (\mathrm{GL}_n\cdot\epsilon_r)=n^2-n-1$.  In particular, $\dim(\mathbb{E}(r,\mathfrak{gl}_n)\ge n^2-n-1$ for $1\le r< n-1$.
\end{proposition}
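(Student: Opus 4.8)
The plan is to realize $\mathrm{GL}_n\cdot\epsilon_r$ as the image of the regular nilpotent orbit under a natural equivariant map and to identify the generic fiber, in the spirit of the Remark following Proposition \ref{RegOrb}. First note that $\epsilon_r\in\mathbb{E}(r,\mathfrak{gl}_n)$: the powers $X,\dots,X^r$ are linearly independent, commute, and satisfy $(X^i)^{[p]}=X^{ip}=0$ since $p\ge h(\mathrm{GL}_n)=n$; likewise, for any regular nilpotent $Y$ the span $\epsilon_r(Y):=\mathrm{Span}\{Y,Y^2,\dots,Y^r\}$ lies in $\mathbb{E}(r,\mathfrak{gl}_n)$. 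The assignment $Y\mapsto\epsilon_r(Y)$ is a $\mathrm{GL}_n$-equivariant morphism from the regular nilpotent orbit $\mathcal{O}_{\mathrm{reg}}$, of dimension $n^2-n$, to $\mathbb{E}(r,\mathfrak{gl}_n)$, with image the single orbit $\mathrm{GL}_n\cdot\epsilon_r$. Since $\mathrm{GL}_n$ acts transitively on $\mathcal{O}_{\mathrm{reg}}$, all fibers are isomorphic, so
\[
\dim(\mathrm{GL}_n\cdot\epsilon_r)=(n^2-n)-\dim\mathcal{S},\qquad \mathcal{S}:=\{\,Y\in\mathcal{O}_{\mathrm{reg}}:\epsilon_r(Y)=\epsilon_r\,\},
\]
and it suffices to show $\dim\mathcal{S}=1$. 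One inclusion is immediate: for $D_\lambda:=\mathrm{diag}(\lambda^{n-1},\dots,\lambda,1)$ one has $D_\lambda X D_\lambda^{-1}=\lambda X$, so $D_\lambda$ carries each $X^i$ into $\epsilon_r$ and $\lambda X\in\mathcal{S}$; hence the punctured line $\{\lambda X:\lambda\ne 0\}$ lies in $\mathcal{S}$, and the real task is to prove that these scalar multiples of $X$ exhaust $\mathcal{S}$.

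This is the heart of the argument. If $Y\in\mathcal{S}$ then $Y\in\epsilon_r(Y)=\epsilon_r$, so $Y=\sum_{i=1}^{r}c_iX^i$; expanding, $Y^{n-1}=c_1^{n-1}X^{n-1}$, which is nonzero because $Y$ is regular nilpotent, so $c_1\ne 0$. As the $Y^j$ then have distinct leading terms $c_1^jX^j$ they are linearly independent, and therefore $\epsilon_r(Y)=\epsilon_r$ is equivalent to the conditions $Y^j\in\epsilon_r$ for $j=1,\dots,r$. Suppose some $c_i$ with $i\ge 2$ is nonzero and let $m$ be the least such index, so $2\le m\le r$; write $Y=Xu$ with $u=c_1+c_mX^{m-1}+(\text{higher order})\in k[X]$. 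Then $Y^j=X^ju^j$, and since $m\ge 2$ the coefficient of $X^{m-1}$ in $u^j=(c_1+\cdots)^j$ equals $jc_1^{\,j-1}c_m$; this is nonzero for $1\le j\le r$ because $p\ge n>r\ge j$ makes $j$ invertible in $k$. Hence $Y^j$ has nonzero $X^{\,j+m-1}$-coefficient. Taking $j=r+2-m$, which lies in $\{2,\dots,r\}$ because $2\le m\le r$, gives $j+m-1=r+1$, and $r+1\le n-1$ since $r<n-1$, so $Y^{r+2-m}$ has a nonzero component on $X^{r+1}\notin\epsilon_r$. This contradicts $Y^{r+2-m}\in\epsilon_r$, so every $c_i$ with $i\ge 2$ vanishes, $\mathcal{S}=\{\lambda X:\lambda\ne 0\}$, and $\dim\mathcal{S}=1$.

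Putting this together gives $\dim(\mathrm{GL}_n\cdot\epsilon_r)=(n^2-n)-1=n^2-n-1$, and since $\mathrm{GL}_n\cdot\epsilon_r\subseteq\mathbb{E}(r,\mathfrak{gl}_n)$ we conclude $\dim(\mathbb{E}(r,\mathfrak{gl}_n))\ge n^2-n-1$. The main obstacle is the second paragraph, and in particular making it uniform in $r$: when $r\le n/2$ one can simply kill $c_2,c_3,\dots$ in turn using the equations forced by $Y^r\in\epsilon_r$, but in the range $n/2<r<n-1$ no single power $Y^j$ does this, and the ``least nonzero index'' device above is what eliminates the case analysis. One must also keep track of which integers are invertible in $k$ --- this is where $p\ge n$ enters --- and it is no accident that the argument degenerates exactly at $r=n-1$, where Proposition \ref{RegOrb} yields the different value $(n-1)^2$.
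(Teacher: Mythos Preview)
Your proof is correct and takes a genuinely different route from the paper's. The paper computes the stabilizer $G_{\epsilon_r}$ directly, showing by a matrix-entry argument (parallel to Proposition~\ref{RegOrb}) that $g\in G_{\epsilon_r}$ if and only if $g\in N_G(kX)$; since $N_G(kX)$ has dimension $n+1$, the orbit has dimension $n^2-(n+1)$. You instead realize the orbit as the image of the equivariant morphism $\mathcal{O}_{\mathrm{reg}}\to\mathbb{E}(r,\mathfrak{gl}_n)$, $Y\mapsto\epsilon_r(Y)$, and compute the fiber $\mathcal{S}$ over $\epsilon_r$ purely in the commutative polynomial ring $k[X]/(X^n)$, obtaining $\mathcal{S}=k^\times\!\cdot X$ and hence orbit dimension $(n^2-n)-1$. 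The two calculations are equivalent via the surjection $G_{\epsilon_r}\to\mathcal{S}$, $g\mapsto gXg^{-1}$, with fibers cosets of $C_G(X)$ (dimension $n$), so $\dim G_{\epsilon_r}=\dim\mathcal{S}+n$.

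What your approach buys is that it avoids the explicit superdiagonal bookkeeping of equation~\eqref{ijentry} entirely: the ``least nonzero index'' trick with $j=r+2-m$ is a clean, coordinate-free replacement, and it makes transparent exactly where $r<n-1$ and $p\ge n$ are used. The paper's approach, on the other hand, identifies the stabilizer group itself (not just its dimension), which is extra information. Your argument is also closer in spirit to the Remark following Proposition~\ref{RegOrb}, and in fact your computation of $\mathcal{S}$ can be read as proving the sharper statement underlying the paper's proof, namely that $\epsilon_r(Y)=\epsilon_r$ forces $kY=kX$ --- equivalently, $N_G(\epsilon_r)=N_G(kX)$.
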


\begin{proof}
I claim that the map sending a matrix $A\in G_{\epsilon_r}$ to $(a_{11},a_{22},a_{12},a_{13},\ldots,a_{1n})$ is an isomorphism onto the quasi-affine variety in $\mathbb{A}^{n+1}$ defined by the condition that the first two coordinates are nonzero.  From this claim we have $\dim (\mathrm{GL}_n\cdot\epsilon_r)=n^2-(n+1)$.  The proof of the claim follows similar reasoning of the proof of Proposition \ref{RegOrb}, except that the superdiagonals corresponding to $j-i=k>r$ must now be $0$.  It follows that $g\in G_{\epsilon_r}$ if and only if $g\in N_G(\mathrm{Span}\{X\})$.  To see this, note if $g\in N_G(\epsilon_r)$ and $gXg^{-1}=\sum c_iX^i$ with some $c_i\ne 0$ for $i>1$, then $gX^rg^{-1}$ will have nonzero entries in the superdiagonal corresponding to $k=r+1$.  Hence, for $k>1$ a choice of $a_{1,k}$ determines $a_{i,k+i-1}$ for $i=2,\ldots,n-k+1$.  For $k=1$, we can still choose $a_{1,1}$ and $a_{2,2}$ arbitrarily.  It follows that $\dim{G_{\epsilon_r}}=(n-1)+2=n+1$, so that $\dim (\mathrm{GL}_n\cdot\epsilon_r)=n^2-(n+1)$.  Notice we have shown that the corresponding bound on $\dim(\mathbb{E}(r,\mathfrak{gl}_n)$ is sharp in the limiting case $r=1$.  However, Table \ref{DimOrb} shows that we may have strict inequality for $r=2,\ldots,n-2$.
\end{proof}

Motivated by Conjecture \ref{OrbitSize}, we include two propositions computing the sizes of the different conjugacy classes found in Table \ref{NumOrb}.

\begin{proposition}
There is one orbit in $\mathbb{E}(1,\mathfrak{gl}_2)$, and the number of $\mathbb{F}_p$-rational points is $p+1$.
\end{proposition}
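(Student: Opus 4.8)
The plan is to identify $\mathbb{E}(1,\mathfrak{gl}_2)$ with the projectivized nilpotent cone of $\mathfrak{gl}_2$, to observe that this is a smooth plane conic on which $\mathrm{GL}_2$ acts transitively, and then to count its $\mathbb{F}_p$-points directly. The first step is to record that every nilpotent $X\in\mathfrak{gl}_2$ satisfies $X^2=0$ by the Cayley--Hamilton theorem, so that the restriction $X^{[p]}=X^p$ vanishes automatically. Hence the rank-$1$ elementary subalgebras of $\mathfrak{gl}_2$ are precisely the lines $\langle X\rangle$ with $0\ne X\in\mathcal{N}(\mathfrak{gl}_2)$; writing a traceless matrix as
\[
X=\begin{pmatrix} a & b \\ c & -a\end{pmatrix}
\]
and imposing $\det X=-(a^2+bc)=0$ identifies $\mathbb{E}(1,\mathfrak{gl}_2)$ with the conic $C:a^2+bc=0$ in $\mathbb{P}^2$ with homogeneous coordinates $[a:b:c]$.

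For the orbit assertion, I would use that every nonzero nilpotent $2\times 2$ matrix has rank $1$ and a single Jordan block, hence is $\mathrm{GL}_2$-conjugate to $E_{12}$; thus $\mathrm{GL}_2$ acts transitively on $\mathbb{E}(1,\mathfrak{gl}_2)$, which is therefore a single $G$-orbit, and it is defined over $\mathbb{F}_p$ since $G$ and $\mathcal{N}(\mathfrak{gl}_2)$ are. For the point count I would check that $C$ is smooth in every characteristic (the gradient $(2a,c,b)$, or $(0,c,b)$ when $p=2$, has no common zero in $\mathbb{P}^2$) and has the rational point $[0:1:0]$; a smooth plane conic with a rational point is isomorphic to $\mathbb{P}^1$, so $|\mathbb{E}(1,\mathfrak{gl}_2)(\mathbb{F}_p)|=|\mathbb{P}^1(\mathbb{F}_p)|=p+1$. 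Alternatively, and without reference to characteristic, a rank-$1$ nilpotent over $\mathbb{F}_p$ has the form $X=vw^{t}$ with $v,w\in\mathbb{F}_p^2\setminus\{0\}$ and $w^{t}v=0$, and the line $\langle X\rangle$ depends only on $[v]\in\mathbb{P}^1(\mathbb{F}_p)$ (the relation $w^{t}v=0$ then forces $[w]$), giving again $p+1$ points. By Theorem \ref{FinOrb} this number also equals the size of the unique $\mathrm{GL}_2$-conjugacy class of rank-$1$ elementary abelian $p$-subgroups of $\mathrm{GL}_2(\mathbb{F}_p)$, namely the $p+1$ subgroups $\langle I+N\rangle$ as $\langle N\rangle$ ranges over the lines of nilpotents (equivalently over the lines in $\mathbb{F}_p^2$), in direct analogy with the earlier $\mathrm{SO}_3$ computation.

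The argument is elementary throughout, and there is no serious obstacle. The only points requiring a moment of care are verifying that the restriction condition $X^{[p]}=0$ imposes nothing beyond nilpotence, so that $\mathbb{E}(1,\mathfrak{gl}_2)$ is genuinely the whole projectivized nullcone, and confirming the smoothness of $C$ (or else using the characteristic-free parametrization $X=vw^{t}$) so that the count $p+1$ holds uniformly, including when $p=2$, which is permitted by condition $(\star)$ for $\mathrm{GL}_2$.
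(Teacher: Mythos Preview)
Your proof is correct but takes a genuinely different route from the paper. The paper argues entirely on the group side, as an illustration of Theorem~\ref{FinOrb}: a Sylow $p$-subgroup of $\mathrm{GL}_2(\mathbb{F}_p)$ is cyclic of order $p$, so Sylow's theorem gives a single conjugacy class; the normalizer of the standard representative is the full upper-triangular Borel, and orbit--stabilizer yields $|\mathrm{GL}_2(\mathbb{F}_p)|/\bigl(p(p-1)^2\bigr)=p+1$. You instead work directly on the Lie-algebra side, identifying $\mathbb{E}(1,\mathfrak{gl}_2)$ with a smooth plane conic (hence $\mathbb{P}^1$) and counting its rational points; Jordan canonical form replaces the Sylow argument for transitivity. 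Your approach is more elementary and self-contained, while the paper's approach is there precisely to exercise the bijection of Theorem~\ref{FinOrb} in the simplest nontrivial case; you invoke that theorem only as a consistency check at the end, which inverts the paper's logic.

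One small correction: for $p=2$ the gradient $(0,c,b)$ \emph{does} vanish at $[1:0:0]\in\mathbb{P}^2$; smoothness still holds because that point is not on the conic $a^2+bc=0$. Your characteristic-free parametrization $X=vw^{t}$ sidesteps this entirely and is the cleaner way to handle the count uniformly.
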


\begin{proof}
A Sylow $p$-subgroup of $\mathrm{GL}(2,p)$ is isomorphic to $\mathbb{Z}/p\mathbb{Z}$, so the Sylow theorems show there is a unique $G$-conjugacy class.  One such group is represented by the matrices
\[\left\{\begin{pmatrix}1&a\\0&1\end{pmatrix}\middle|a\in\mathbb{F}_p\right\}\]
whose stabilizer under conjugation is the group
\[\left\{\begin{pmatrix}a&b\\0&c\end{pmatrix}\middle|a,c\in\mathbb{F}_p^{\times},\;b\in\mathbb{F}_p\right\}\]
This stabilizer has order $p(p-1)^2$, so that the size of the orbit is
\[\frac{|GL(2,p)|}{p(p-1)^2}=\frac{(p^2-p)(p^2-1)}{p(p-1)^2}=p+1\]
The result then follows from Theorem \ref{FinOrb}.
\end{proof}

\begin{proposition}\label{A23}
There are three $G$-orbits in $\mathbb{E}(2,\mathfrak{gl}_3)$, two with $p^2+p+1$ $\mathbb{F}_p$-rational points, and one with $(p^2+p+1)(p+1)(p-1)$ $\mathbb{F}_p$-rational points.
\end{proposition}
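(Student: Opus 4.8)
The plan is to combine Example~\ref{GL32} with Theorem~\ref{FinOrb} and an orbit--stabilizer count inside $\mathrm{GL}_3(\mathbb{F}_p)$. Example~\ref{GL32} already exhibits the three $\mathrm{GL}_3$-orbits of $\mathbb{E}(2,\mathfrak{gl}_3)$, all defined over $\mathbb{F}_p$, with representatives $\epsilon_1=\langle E_{12},E_{13}\rangle$ (the type $[1:0]$ subalgebra), $\epsilon_2=\langle E_{23},E_{13}\rangle$ (the type $[0:1]$ subalgebra), and $\epsilon_3=\langle X,X^2\rangle$ where $X=E_{12}+E_{23}$ is regular nilpotent (the orbit of subalgebras containing a rank-$2$ element). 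Since $d=1$, Theorem~\ref{FinOrb} says the number of $\mathbb{F}_p$-rational points of the orbit $\mathcal{O}_i=\mathrm{GL}_3\cdot\epsilon_i\cong\mathrm{GL}_3/G_{\epsilon_i}$ equals the size of the corresponding $G$-conjugacy class; and once each stabilizer $G_{\epsilon_i}$ is seen to be connected, Lang's theorem (Theorem~\ref{Lang}) gives $|\mathcal{O}_i(\mathbb{F}_p)|=|\mathrm{GL}_3(\mathbb{F}_p)|/|G_{\epsilon_i}(\mathbb{F}_p)|$. The count $|\mathrm{GL}_3(\mathbb{F}_p)|=(p^3-1)(p^3-p)(p^3-p^2)=p^3(p-1)^3(p+1)(p^2+p+1)$ is then the common numerator.

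For $\epsilon_1$ I would observe that $\epsilon_1=\{Y\in\mathfrak{gl}_3:\operatorname{im}(Y)\subseteq L\subseteq\ker(Y)\}$ for the line $L=\langle e_1\rangle$, and that $L$ is recovered intrinsically from $\epsilon_1$ as $\sum_{Y\in\epsilon_1}\operatorname{im}(Y)$. Hence $A\in G_{\epsilon_1}$ forces $AL=L$, and conversely any $A$ fixing $L$ sends $\operatorname{im}(Y)\subseteq L$ and $\ker(Y)\supseteq L$ into themselves, so $G_{\epsilon_1}$ is exactly the maximal parabolic $\mathrm{Stab}_{\mathrm{GL}_3}(L)\cong\mathbb{G}_m\times\mathbb{A}^2\times\mathrm{GL}_2$. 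This is connected, with $|G_{\epsilon_1}(\mathbb{F}_p)|=(p-1)p^2|\mathrm{GL}_2(\mathbb{F}_p)|=p^3(p-1)^3(p+1)$. A dual argument shows $\epsilon_2=\{Y:\operatorname{im}(Y)\subseteq H\subseteq\ker(Y)\}$ for the plane $H=\langle e_1,e_2\rangle=\bigcap_{Y\in\epsilon_2}\ker(Y)$, so $G_{\epsilon_2}=\mathrm{Stab}_{\mathrm{GL}_3}(H)$ has the same order. Dividing yields $|\mathcal{O}_1(\mathbb{F}_p)|=|\mathcal{O}_2(\mathbb{F}_p)|=p^2+p+1$.

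For $\epsilon_3$ I would invoke Proposition~\ref{RegOrb} in the case $n=3$: its proof identifies $G_{\epsilon_3}$, as a variety, with the quasi-affine variety $\{x_1\neq 0,\ x_2\neq 0\}\subseteq\mathbb{A}^5$, i.e.\ with $\mathbb{G}_m^2\times\mathbb{A}^3$. In particular $G_{\epsilon_3}$ is connected and $|G_{\epsilon_3}(\mathbb{F}_p)|=(p-1)^2p^3$, so
\[
|\mathcal{O}_3(\mathbb{F}_p)|=\frac{p^3(p-1)^3(p+1)(p^2+p+1)}{p^3(p-1)^2}=(p-1)(p+1)(p^2+p+1)=(p^2+p+1)(p+1)(p-1).
\]
This accounts for all three orbits and proves the proposition. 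The part requiring the most care is the identification of $G_{\epsilon_1}$ and $G_{\epsilon_2}$ as the full line- and plane-stabilizers — the content being that $L$ (resp.\ $H$) is determined by $\epsilon_i$, so nothing outside the parabolic preserves $\epsilon_i$ — together with verifying connectedness of all three stabilizers so that Lang's theorem applies; the $\epsilon_3$ count is then immediate from Proposition~\ref{RegOrb}.
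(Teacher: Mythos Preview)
Your proof is correct and reaches the same conclusion, but the route differs from the paper's in a way worth noting. The paper stays on the group side throughout: via Theorem~\ref{FinOrb} it passes to the elementary abelian subgroup $E\subset\mathrm{GL}_3(\mathbb{F}_p)$ corresponding to the regular orbit, computes the normalizer $N_{\mathrm{GL}_3(\mathbb{F}_p)}(E)=\{\text{upper-triangular with }af=d^2\}$ by hand, and reads off the conjugacy-class size by orbit--stabilizer in the finite group (declaring the two small orbits ``similar, omitted''). You instead stay on the Lie-algebra side: you identify each algebraic stabilizer $G_{\epsilon_i}\subset\mathrm{GL}_3$ structurally --- the two small ones as the maximal parabolics fixing the line $L=\sum\operatorname{im}(Y)$ or the plane $H=\bigcap\ker(Y)$, and the large one by quoting the $n=3$ case of Proposition~\ref{RegOrb} --- verify each is connected, and then invoke Lang to get $|\mathcal{O}_i(\mathbb{F}_p)|=|\mathrm{GL}_3(\mathbb{F}_p)|/|G_{\epsilon_i}(\mathbb{F}_p)|$. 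Your approach is more conceptual and makes transparent why the two small counts equal $|\mathbb{P}^2(\mathbb{F}_p)|$; it also makes explicit the connectedness hypothesis that the paper's finite-group computation tacitly needs (to ensure the $G(k)$-conjugacy class does not split into several $G(\mathbb{F}_p)$-classes). The paper's approach, on the other hand, is more self-contained for the large orbit, since it does not forward-reference Proposition~\ref{RegOrb}.
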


\begin{proof}
We know from Example \ref{GL32} that there are $3$ $G$-orbits of $\mathbb{E}(2,\mathfrak{gl}_3)$.  The orbit consisting of subalgebras with elements of rank $2$ has representative
\[
E=\left\langle\begin{pmatrix}1&0&1\\0&1&0\\0&0&1\end{pmatrix},\begin{pmatrix}1&1&0\\0&1&1\\0&0&1\end{pmatrix}\right\rangle
\]
in $\mathrm{GL}_3(\mathbb{F}_p)$.  The normalizer of $E$ is
\[
N_{\mathrm{GL}_3(\mathbb{F}_p)}(E)=\left\{\begin{pmatrix}a&b&c\\0&d&e\\0&0&f\end{pmatrix}\middle|af=d^2\right\}
\]
which has order $p^3(p-1)^2$.  Orbit-stabilizer then gives the size of the orbit:
\[
\frac{|\mathrm{GL}(3,p)|}{p^3(p-1)^2}=\frac{(p^3-p^2)(p^3-p)(p^3-1)}{p^3(p-1)^2}=(p^2+p+1)(p+1)(p-1)
\]
The result for the large orbit follows from Theorem \ref{FinOrb}.  The proof for the sizes of the other two conjugacy classes is similar, and omitted.
\end{proof}

Notice that the dimensions computed in Example \ref{GL32} and the degree of the polynomials in Proposition \ref{A23} provide evidence for the veracity of Conjecture \ref{OrbitSize}.

\begin{example}\label{34}
If Conjecture \ref{OrbitSize} is true, then a computation with Magma shows that $\mathbb{E}(3,\mathfrak{gl}_4)$ contains $8$ $G$-orbits defined over $\mathbb{F}_p$ of dimensions 3, 3, 6, 7, 7, 7, 8, and 9.  The two orbits of dimension 3 must be closed, and by irreducibility, all orbits of degree less than 9 lie in the boundary of the orbit of dimension 9.  Further understanding of the partial order on the orbits is necessary to determine which intermediate orbits lie in the closure of others.  For example, is the orbit of dimension 6 closed, or is one or more of the 3 dimensional orbits found in its closure?  Can this question be answered by some group theoretic condition on the $G$-conjugacy classes of subgroups corresponding to the orbits of dimensions 3, 3, and 6, per Question \ref{GroupCond}?  

Table \ref{GL34Tab} records how many $\mathbb{F}_p$-rational points lie in each orbit.

\begin{table}[ht]
\caption{Orbits of $\mathbb{E}(3,\mathfrak{gl}_4)$ defined over $\mathbb{F}_p$}\label{GL34Tab}
\centering
\renewcommand\arraystretch{1.5}
\begin{tabular}{c c}
\hline\hline
$\dim(\mathcal{O})$ & $\#\mathcal{O}(\mathbb{F}_p)$\\ [0.5ex]
\hline
3&$(p^2+1)(p+1)$\\
3&$(p^2+1)(p+1)$\\
6&$(p^2+p+1)(p^2+1)(p+1)^2$\\
7&$(p^2+p+1)(p^2+1)(p+1)p(p-1)$\\
7&$(p^2+p+1)(p^2+1)(p+1)^2(p-1)$\\
7&$(p^2+p+1)(p^2+1)(p+1)^2(p-1)$\\
8&$(p^2+p+1)(p^2+1)(p+1)p^2(p-1)$\\
9&$(p^2+p+1)(p^2+1)(p+1)^2p(p-1)^2$\\
\end{tabular}
\end{table}
\end{example}

\section*{Acknowledgements}
We would like to thank Eric Friedlander, whose support and guidance were central throughout the research and writing of this paper.  In particular, Eric's contributions include, but are not limited to, the suggestion to use Springer isomorphisms to study elementary subalgebras and the idea to apply Lang's theorem in the proof of Theorem \ref{FinOrb}.  Paul Sobaje is also due our gratitude for explaining the history and development of the canonical Springer isomorphism, and for answering many other questions during the editing process of this paper.  A helpful conversation with Jim Stark contributed to the results of \S \ref{AnIsoOfCat} and their exposition.  We would also like to thank Robert Guralnick for pointing out an error in the first version of this paper, and for contributing Example \ref{InfOrb}.  Finally, we are very grateful to the referee for comments which helped us clarify the results of this paper and better understand them in relation to the relevant literature.

\end{document}